  \DeclareMathAlphabet{\pazocal}{OMS}{zplm}{m}{n}
  \tikzset{>=stealth}
  \newcommand{\calA}{\mathcal{A}}
  \newcommand{\calB}{\mathcal{B}}
  \newcommand{\calF}{\mathcal{F}}
  \newcommand{\calS}{\mathcal{S}}
  \newcommand{\calX}{\mathcal{X}}
  \newcommand{\FF}{\mathbb{F}}
  \newcommand{\RR}{\mathbb{R}}
  \newcommand{\ZZ}{\mathbb{Z}}
  \newcommand{\gothic}{\mathfrak}
  \newcommand{\Gg}{{\gothic g}}
  \newtheorem{theorem}{Theorem}[section]
  \newtheorem{proposition}[theorem]{Proposition}
  \newtheorem{lemma}[theorem]{Lemma}
  \newtheorem{introthm}{Theorem}
  \newtheorem{introques}[introthm]{Question}
  \theoremstyle{definition}
  \newtheorem{definition}[theorem]{Definition}
  \newtheorem*{claim*}{Claim}
  \newtheorem{example}[theorem]{Example}
  \newtheorem*{question*}{Question}
  \newtheorem*{answer*}{Answer}
  \newtheorem*{application*}{Application}
  \theoremstyle{remark}
  \newtheorem{remark}[theorem]{Remark}
  \newtheorem*{remark*}{Remark}
  \newcommand{\secref}[1]{Section~\ref{#1}}
  \newcommand{\thmref}[1]{Theorem~\ref{#1}}
  \newcommand{\lemref}[1]{Lemma~\ref{#1}}
  \newcommand{\propref}[1]{Proposition~\ref{#1}}
  \newcommand{\exaref}[1]{Example~\ref{#1}}
  \newcommand{\figref}[1]{Figure~\ref{#1}}
  \newcommand{\defref}[1]{Definition~\ref{#1}}
  \newcommand{\eqnref}[1]{Equation~\eqref{#1}}
  \newcommand{\mf}[1]{\mathfrak{#1}}
  \DeclareMathOperator{\Mod}{Mod}
  \DeclareMathOperator{\twist}{twist}
  \DeclareMathOperator{\tw}{tw}
  \DeclareMathOperator{\fold}{fold}
  \DeclareMathOperator{\axis}{axis}  
  \DeclareMathOperator{\green}{stretch}
  \newcommand{\Aut}{\ensuremath{\operatorname{Aut}}\xspace} 
  \newcommand{\Inn}{\ensuremath{\operatorname{Inn}}\xspace} 
  \newcommand{\Teich}{{Teichm\"uller }} 
  \newcommand{\bd}{{\overline d}}
  \newcommand{\os}{{\overline s}}
\DeclareMathOperator{\Core}{Core}
\DeclareMathOperator{\Outer}{CV}
\DeclareMathOperator{\out}{Out}
\newcommand{\CV}{{\Outer_{\! n}}}
\newcommand{\Out}{\ensuremath{\out(\F)}}
  \newcommand{\param}{{\mathchoice{\mkern1mu\mbox{\raise2.2pt\hbox{$
  \centerdot$}}
  \mkern1mu}{\mkern1mu\mbox{\raise2.2pt\hbox{$\centerdot$}}\mkern1mu}{
  \mkern1.5mu\centerdot\mkern1.5mu}{\mkern1.5mu\centerdot\mkern1.5mu}}}
  \renewcommand{\setminus}{{\smallsetminus}}
  \newcommand{\from}{\colon\thinspace}
  \newcommand{\F}{{\FF_n}} 
  \newcommand{\BP}{{\mf{0}}}
\begin{document}


  \title[Quasi-geodesics in $\Out$ and their shadows in sub-factors]   
  {Quasi-geodesics in $\Out$ and their shadows in sub-factors}
  
%
 \author   {Yulan Qing}
 \address{Shanghai Center for Mathematical Sciences, Fudan University, Shanghai }
 \email{qingyulan@fudan.edu.cn}
 \author   {Kasra Rafi}
 \address{Department of Mathematics, University of Toronto, Toronto, ON }
 \email{rafi@math.toronto.edu}
%
 
  \date{\today}

\begin{abstract} 
We study the behaviour of quasi-geodesics in $\Out$. Given an element 
$\phi$ in $\Out$, there are several natural paths connecting the origin 
to $\phi$ in $\Out$; for example, paths given by Stallings' Folding Algorithm  
and paths induced by the shadow of greedy folding paths in Outer Space. We show that 
none of these paths is, in general, a quasi-geodesic in $\Out$. In fact, in contrast with 
the mapping class group setting, we construct examples where any quasi-geodesic 
in $\Out$ connecting $\phi$ to the origin will have to back-track in some free factor of $\F$. 
\end{abstract}
  
  \maketitle
  

\section{Introduction}
Let $\F = \langle s_1, s_2, ..., s_n \rangle$ denote the free group of rank $n$ and let $\Out$ denote 
the group of outer automorphisms of $\F$, 
\[
\Out : = \Aut(\F) / \Inn(\F).
\]
This group is finitely presented \cite{Nielsen}. For example, it can be generated by 
the set of right transvections and left
transvections:
\[
 \left \{
 \begin{array}{ll}s_i \rightarrow s_is_k \\
  s_j \rightarrow s_j \qquad \text{ for all } j \neq i \\
\end{array}
\right.
\hspace{1cm}  \left \{
 \begin{array}{ll}s_i \rightarrow s_k s_i \\
  s_j \rightarrow s_j \qquad \text{ for all } j \neq i \\
\end{array}
\right.
\]
together with elements permuting the basis and elements sending some $s_i$ to its inverse
$\overline{s}_i$. For an explicit presentation see \cite{McCool}.
Equip $\Out$ with the word metric associated to this generating set. We aim to 
understand the geometry $\Out$ as a metric space. Specifically, we want to understand 
the quasi-geodesics in $\Out$.

A common way to generate a path connecting a point in $\Out$ to the identity 
is to use the seminal idea of Stallings' Folding Algorithm \cite{stallingsfolding} which we discuss here briefly. One can create a model
for $\Out$ by considering the space of graphs $x$ of rank $n$ where the oriented edges
are labeled by elements of $\F$ inducing an isomorphism $\mu \from \pi_1(x) \to \F$
(defined up to conjugation and graph automorphism). We refer to $\mu$ as
the \emph{marking map}. For another such marked graph $x'$ with marking
map $\mu' \from \pi_1(x') \to \F$, we say $x'$ is obtained from $x$ by a \emph{fold} if 
there is a quotient map from $x$ to $x'$, 
\[\fold \from  x \to x',\] 
inducing an isomorphism $\fold^\star \from \pi_1(x) \to \pi_1(x')$ so that 
$\mu = \mu' \circ \fold^\star$.

Let $R_0$ be a rose with labels $s_1, \dots, s_n$ inducing an isomorphism 
\[
\mu_0 \from \pi_1(R_0) \to \F,
\] 
and, for $\phi \in \Out$, let $R = \phi(R_0)$ be the rose where the marking map 
\[
\mu \from \pi_1(R) \to \F
\qquad\text{is given by} \qquad \mu = \phi \, \mu_0.
\] 
Then, a sequence 
\[
R=x_m \to x_{m-1}  \ldots \to  x_1 \to x_0 = R_0
\] 
of folds produces a path in $\Out$ connecting the identity to $\phi$ as follows: 
for $0 <i \leq m$, consider a quotient map $x_i \to R_i$ by collapsing a spanning sub-tree, resulting in a rose $R_i$. Also, let $q_i \from x_i \to x_0=R_0$ be the composition of 
the folding maps. Then, we have the following diagram of homotopy 
equivalences:
\[
R_0 \to R_i \leftarrow x_i \to R_0
\]
where the first arrow is any graph automorphism between the two roses. 
We associate $x_i$ to the induced map $\phi_i \from \pi_1(R_0) \to \pi_1(R_0)$. 
The map $\phi_i \in \Out$
is coarsely well defined, depending on the chosen quotient maps 
$x_i \to R_i$ and the graph automorphism $R_0 \to R_i$. 
But the set of all possible resulting maps has a uniformly bounded diameter
in $\Out$ and the distance between $\phi_i$ and $\phi_{i+1}$ in $\Out$
is uniformly bounded. That is, we have a coarse path 
\[
\phi = \phi_m , \dots, \phi_0 ={\rm id}
\] 
in $\Out$ connecting $\phi$ to the identity which we refer to as a 
\emph{folding path} in $\Out$. 

Stallings Folding theorem provides an algorithm for finding a sequence of folds 
connecting any marked graph $x$ to $R_0$ where, in each fold, edges are identified 
according to their labels (see Section~\ref{Sec:Fold}). That is to say, the folds given by 
the Stallings algorithm are directed by the marking map. A path consisting of a sequence of marking-directed folds is called 
a \emph{Stallings folding path}. The ideas in Stallings' Folding Algorithm have found many interesting applications in the study of automorphisms of free groups. (see, for example, \cite{SF1} \cite{SF2}
\cite{SF3},\cite{SF4} and \cite{SF5}). 

Among the complexes frequently used to study $\Out$ (see for exmaple \cite{HM}, \cite{BFhyperbolicity} and \cite{KR}) are the free splitting complex and the free factor complex (see Section~\ref{FFandFS} for definitions). Let $\calS(\calA)$ denote the free splitting complex of $\F$. Stallings folding paths can be thought of as paths in $\calS(\calA)$. Handel and Mosher showed that these paths are  quasi-geodesics in $\calS(\calA)$ \cite{HM}. Let $\calF(\calA)$ denote the complex of free factors of $\F$. Kapovich and Rafi \cite{KR} proved that a projection from $\calS(\calA)$ to $\calF(\calA)$ takes geodesics to quasi-geodesics. Also, generalization of Stallings folding paths give rise to geodesics in $\CV$, for example see \cite{FMmetric} and \cite{convexball}.

However, Stallings folding paths do not in general give efficient paths in \Out. 

\begin{introthm} \label{Thm:SF} 
For any $K, C>0$, there exists an element $\phi \in \Out$ such that any Stallings folding 
path connecting $R=\phi(R_0)$ to $R_0$ does not yield a $(K, C)$--quasi-geodesic path
in \Out. 
\end{introthm}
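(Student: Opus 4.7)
The plan is to exhibit, for each $(K, C)$, an element $\phi \in \Out$ such that every Stallings folding path connecting $R = \phi(R_0)$ to $R_0$ is far longer than the word length $d_{\Out}(\phi, \mathrm{id})$. I would take $\phi = \psi^N$ for large $N$, where $\psi$ is a fixed automorphism whose iterates produce exponentially long words. For concreteness, let $\psi \in \Out$ be the Fibonacci automorphism defined by $\psi(s_1) = s_2$, $\psi(s_2) = s_2 s_1$, and $\psi(s_j) = s_j$ for $j \geq 3$. Its transition matrix has Perron eigenvalue $\lambda = (1+\sqrt{5})/2$, so the cyclically reduced length $|\psi^N(s_1)|$ satisfies $|\psi^N(s_1)| \geq c\lambda^N$ for some $c>0$. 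Set $\phi_N = \psi^N$ and $R_N = \phi_N(R_0)$.

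To obtain an exponential lower bound on the length of \emph{any} Stallings folding path from $R_N$ to $R_0$, I would use the total edge-count, equivalently the total word-length of labels, as a monotone invariant. Representing $R_N$ with every edge labelled by a single generator forces it to contain $\sum_{i=1}^n |\phi_N(s_i)|$ edges, while $R_0$ carries only $n$. Since each Stallings fold is a quotient map identifying one pair of edges with a common vertex and equal labels, the edge count decreases by exactly one at every step. Hence any folding sequence $R_N = x_m \to \cdots \to x_0 = R_0$ satisfies
\[
m \;\geq\; \sum_{i=1}^n |\phi_N(s_i)| - n \;\geq\; c\lambda^N - n.
\]

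For the complementary upper bound on word length, $\psi$ is a fixed element of $\Out$, so $k := d_{\Out}(\psi, \mathrm{id})$ is finite and the triangle inequality yields $d_{\Out}(\phi_N, \mathrm{id}) \leq kN$. Given $K, C > 0$, choose $N$ with $c\lambda^N - n > KkN + C$, which is possible because $\lambda^N/N \to \infty$. For this $N$, every Stallings folding path from $R_N$ to $R_0$ has length strictly greater than $K \cdot d_{\Out}(\phi_N, \mathrm{id}) + C$, and so cannot be a $(K, C)$--quasi-geodesic.

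The main technical step is the monotonicity claim in the second paragraph, namely that the total combinatorial length of the marking cannot be collapsed in fewer than $\sum_i |\phi_N(s_i)| - n$ folds regardless of the choices of subdivisions, of the auxiliary quotient maps $x_i \to R_i$, and of the graph automorphisms used to identify $R_i$ with $R_0$ in the paper's setup. One must verify that none of these auxiliary freedoms can shortcut the exponential cost of ``unfolding'' the Fibonacci marking; this reduces to the elementary observation that a quotient map identifying label-equivalent edges can only decrease the total generator-count of the labels. A secondary routine point is the Perron--Frobenius estimate $|\psi^N(s_1)| \geq c\lambda^N$, which follows from non-negativity and irreducibility of the transition matrix of $\psi$.
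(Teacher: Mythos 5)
Your proof is correct, and it takes a genuinely different route from the paper's. The paper chooses the polynomial-growth automorphism $\phi\colon a\mapsto a,\ b\mapsto b,\ c\mapsto c(ab^s)^t$, observes that the Stallings folding path from $\phi(R_0)$ to $R_0$ is \emph{unique} (at every stage only one pair of edgelets is foldable) of length $t(s+1)$, and then exhibits an explicit competing path of length $\approx 2s+t$ in $\Out$ to conclude. You instead pick an exponentially growing $\phi_N=\psi^N$ and use the monotone invariant given by the \emph{edgelet count}: since every Stallings fold identifies exactly one pair of edgelets (and the quotient is a homotopy equivalence, so the identified terminal vertices are distinct), the count drops by exactly $1$ per fold, forcing \emph{every} Stallings folding sequence to have length $\sum_i|\phi_N(s_i)|-n \geq c\lambda^N - n$, while $\|\phi_N\|\leq N\|\psi\|$ by the triangle inequality. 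Your argument buys two things the paper's doesn't: it avoids any uniqueness claim about the folding path (the edgelet count bounds all of them at once), and the invariant isolates the structural reason behind the phenomenon --- indeed the paper's own example is explained by the same count, $3+t(s+1)-3=t(s+1)$, without appealing to uniqueness. The paper's choice of a twisting automorphism is, however, better attuned to the rest of the paper, since the twisting example reappears in Theorems B--D. One tiny point to tighten: the endpoints of the coarse path in $\Out$ are only within a uniformly bounded distance $B$ of $\mathrm{id}$ and $\phi_N$, so the quasi-geodesic inequality should be tested against $d(\phi_0,\phi_m)\leq \|\phi_N\| + 2B$; this costs only an extra additive constant and is swallowed by the exponential growth, but it should be stated.
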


\begin{proof}
Consider the following automorphism 
\[
\phi \colon 
\begin{cases}
a \to & a \\
b \to & b \\
c \to & c \, (ab^s)^t
\end{cases} 
\]
That is, if $R_0$ is the rose with edge labels $a$, $b$ and $c$, then
$R=\phi(R_0)$ has labels $a$, $b$, and $(ab^s)^t$. Following Stallings Folding Algorithm,  
to go from $R$ to $R_0$ we need to fold the third edge around the edges labeled $a$ 
and $b$. At each step, there is only one fold possible. The first and second edges remain
unchanged since they are also present in $R_0$. This takes $t\ (s+1)$ steps. That is, 
the associated path in $\Out$ has a length comparable to $t\, s$. 
  
However, one can see that there is another path connecting $R$ to $R_0$ that
takes $2s+t$ steps, namely:
\[
\big\langle a, b, c(ab^s)^t  \big\rangle \longrightarrow 
\big\langle ab^s, b, c(ab^s)^t \big\rangle \longrightarrow 
\big\langle ab^s, b, c \big\rangle \longrightarrow \langle a, b, c\rangle 
\]
Choosing $s, t$ sufficiently large compared with the given $K$ and $C$, we have shown 
that the path given by marking-directed folds was not a $(K,C)$--quasi-geodesic. 
\end{proof} 

One can also represent an element of $\Out$ using \emph{train-track maps} 
(see \cite{BH}) and consider a folding sequence according to the train-track structure. 
Or similarly, consider a geodesic in Culler-Vogtmann Outer Space and take
the shadow (that is, the difference of markings to $R_0$, see the discussion 
at the beginning of \secref{Sec:Outer-Space} and \figref{Fig:shadow})
of it to \Out. The same example above shows that
none of these paths would, in general, produce a quasi-geodesic in \Out. 

On the other hand, we observe in the above example, that even along the shorter paths, it takes at least 
$s$ steps to form or to eliminate an $s$-th power of $b$ and $t$ steps 
to eliminate a $t$-th power of $(ab^s)$. We examine this phenomenon through 
the language of \emph{relative twisting number} \cite{twistdefinition}. 

The \emph{relative twisting number} \cite{twistdefinition} of two labeled graphs  
$x$ and $x'$ around a loop $\alpha$ measures the difference between 
$x$ and $x'$ from the point of view of $\alpha$ and is denoted by $\tw_\alpha(x, x')$ 
(See \defref{Def:Twist}). We show that,  the length of any path in $\Out$ 
connecting $\phi$ to the identity is bounded below by the relative twisting number 
$\tw_\alpha(R, R_0)$ where $R= \phi(R_0)$. 

Again, instead of considering a path in the Cayley graph $\Out$, we consider a 
folding sequence $R_m, \dots, R_0$. This is a sequence of labeled graphs 
where $R_i$ is obtained from $R_{i+1}$ by a fold that it is not necessary coming
from Stallings' algorithm or any train-track structure (see  \secref{Sec:Fold}). 
We also show that if the length of the loop $\alpha$ remains long along the path, 
it takes longer to twist around $\alpha$. 

\begin{introthm}\label{Intro:slowtwist}
For any folding sequence $R_m, \dots, R_0$ and any loop $\alpha$, we have
\[
m \geq \tw_\alpha(R_0, R_n).
\] 
Further, if $\ell_{R_i}(\alpha)\geq L > 50$ for every $i$, then 
\[
m \geq \tw_\alpha(R_0, R_n)  \left( \log_{5} \frac{L}{50} \right). 
\] 
\end{introthm}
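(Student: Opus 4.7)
The plan is to reduce both inequalities to a per-fold estimate on the change of $\tw_\alpha$, and then telescope. Specifically, I would prove that for each step $R_{i+1} \to R_i$ of the folding sequence,
\[
\tw_\alpha(R_{i+1}, R_i) \leq 1,
\]
and that, under the additional assumption $\ell_{R_i}(\alpha) \geq L > 50$,
\[
\tw_\alpha(R_{i+1}, R_i) \leq \frac{1}{\log_5(L/50)}.
\]
Combined with a quasi-triangle inequality for $\tw_\alpha$ (which should follow from the definition in \cite{twistdefinition}, up to an additive constant that can be absorbed into the count), summing the local bound over $i = 0, \dots, m-1$ yields the two stated inequalities in sequence.

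For the basic local estimate, I would unwind the definition of $\tw_\alpha$ in terms of the axis of $\alpha$ in the universal covers of $R_{i+1}$ and $R_i$. A single fold identifies two edges sharing a vertex; equivariantly in the universal cover, this produces a bounded local modification of the axis of $\alpha$ within each fundamental period. Since $\tw_\alpha(R_{i+1},R_i)$ essentially records the discrepancy in the cyclic pattern of edges incident to the axis, this single bounded modification can contribute at most one unit of twist, giving the first inequality.

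The refined estimate under the length condition is where the main obstacle lies. The heuristic is that when $\alpha$ stays long, the axis has many edges per period, so the bounded local modification produced by one fold contributes only a fractional amount of global twisting. To make this precise, I would track an invariant---most naturally the combinatorial length of the sub-arc of the axis of $\alpha$ directly affected by the fold---and show that this sub-arc can grow by a factor of at most $5$ per fold. Building up enough length along the axis to accumulate a full unit of twist around $\alpha$ then requires at least $\log_5(L/50)$ consecutive folds, which is precisely what is encoded in the refined local estimate. With this growth bound in hand, telescoping produces the second inequality. The genuine technical difficulty is identifying the correct invariant and verifying the factor-of-$5$ growth in the presence of arbitrary fold types, in particular folds that need not come from Stallings' algorithm or a train-track structure; handling this will likely require a case analysis on how the two identified edges sit relative to the turns that $\alpha$ takes along its axis.
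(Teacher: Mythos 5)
Your heuristic is essentially right, but your proposed reduction to a per-fold twisting bound plus a quasi-triangle inequality is not the mechanism the paper uses, and it does not give the stated estimate. The paper's local estimate (Theorem~\ref{slowchange}) is not a bound on $\tw_\alpha(R_{i+1}, R_i)$; it compares $\twist_\alpha(R_{i+1}, \cdot)$ and $\twist_\alpha(R_i, \cdot)$ against one \emph{fixed} reference tree, and the crucial content is carried by the \emph{fractional part} of the twist. A single fold shifts $\twist_\alpha(\,\cdot\,, T_m)$ by at most $2/L$ and then divides the fractional remainder by at most $4$; hence the integer part drops by at most one per fold, giving $m \geq \tw_\alpha$, and just after the integer part drops the fractional part is at least $1/5$ and must decay multiplicatively to below $10/L$ before the integer part can drop again --- forcing at least $\log_5(L/50)$ folds per unit of twist (Theorem~\ref{Thm:slowtwist}). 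Your own phrase about tracking the sub-arc of the axis affected by a fold and showing it changes by a bounded multiplicative factor is the right picture and corresponds to Lemma~\ref{factorof2}; but it feeds a potential-function argument relative to a fixed endpoint, not a telescoping sum over consecutive pairs.

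The telescoping version breaks for two reasons. First, the quantity $\twist_\alpha(R_{i+1}, R_i)$ between \emph{adjacent} trees in a folding sequence measures the intersection core of two trees that differ by a single fold, which is tiny --- bounded, and with no mechanism forcing it to shrink like $1/\log_5(L/50)$. Second, any coarse triangle inequality for $\tw_\alpha$ carries an additive constant $C$, and summing over $m$ folds accumulates an error of order $mC$ that overwhelms the per-fold gain; you recover only a multiplicatively weakened version of the first inequality and lose the $\log L$ factor entirely. Indeed, if the per-fold contributions were of order $1/L$ and the triangle inequality were additive-error-free, telescoping would force $m$ to grow linearly in $L \cdot \tw_\alpha$, which Example~\ref{Exa:Log} shows is false: the true lower bound is only logarithmic in $L$, a sharpness visible through the fractional-decay mechanism but invisible to any telescoping argument over consecutive pairs.
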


One might suspect that, in the above theorem, $\log_5 L$  can be replaced with $L$. 
However, we will show that the above inequality is sharp with an example 
(see \exaref{Exa:Log}). 

\thmref{Intro:slowtwist} can be viewed in the context of an attempt to have a distance
formula for the word length of an element in $\Out$ in analogy with the work of 
Masur-Minsky in the setting of the mapping class group \cite{hierarchy}. 
Let $S=S_{g,s}$ be a surface of genus $g$ with $s$ punctures and $\Mod(S)$ 
denote the mapping class group of $S$, that is, the group of orientation preserving 
self-homeomorphisms of $S$ up to isotopy. One can try to understand an element
$f \in \Mod(S)$ inductively by measuring the contribution of every subsurface to 
the complexity of $f$. This is done explicitly as follows: a marking $\mu_0$ on a surface 
$S$ is a set of simple closed curves that fill the surface, that is to say,
 every other curve on $S$ intersects some curve in $\mu_0$. 
 Masur and Minsky introduced a measure of complexity 
$d_Y(\mu_0, f(\mu_0))$ between $\mu_0$ and $f(\mu_0)$ called the subsurface 
projection distance. Namely, they defined a projection map 
\[
\pi_Y \from C(S) \to C(Y)
\]
from the curve graph of $S$ to the curve graph of a sub-surface $Y$ and defined 
$d_Y(\mu_0, f(\mu_0))$ to be the distance in $C(Y)$ between the projection 
$\pi_Y(\mu_0)$ and $\pi_Y(f(\mu_0))$ to $C(Y)$. 
They showed the sum of these subsurface projections is a good estimate for the 
word length of $f$ (see \cite{hierarchy} for more details).

To produce the upper-bound for the distance formula, Masur and Minsky constructed 
a class of quasi-geodesics called \emph{hierarchy paths}, whose lengths is the coarse 
sum of all subsurface projection distances. An important characteristic of 
these quasi-geodesics is that they do not back-track in any subsurface $Y$. 
That is, there is a quasi-geodesic, 
\[
[0,m] \to \Mod(S), \qquad i \to f_i,
\]
in $\Mod(S)$ so that the projection to the curve graph of $Y$
\[
[0,m] \to C(Y), \qquad i \to \pi_Y(f_i(\mu_0))
\]
is a quasi-geodesic for every subsurface $Y$ of $S$.

There are several analogues for the curve graph in the setting $\Out$, most
importantly, the free splitting graph $\calS(\F)$ and the free factor graph
$\calF(\F)$ both have been shown to be Gromov hyperbolic spaces \cite{HM, BFhyperbolicity}. 
For every sub-factor $\calA$ of $\F$, we have projection maps 
\cite{BFhyperbolicity}
\[
\Out \to \calS(\calA) \to \calF(\calA)
\] 
and it is known that every quasi-geodesic in $\calS(\F)$ projects to a quasi-geodesic 
in $\calF(\F)$ \cite{KR}. One may hope to construct quasi-geodesic in $\Out$
where the projections the free splitting or the free factor graph of a sub-factor is
always a quasi-geodesic. However, we use \thmref{Intro:slowtwist} to prove:

\begin{introthm}[Quasi-geodesics back-track in sub-factors]\label{Intro:backtrack}
For given constants $K_1$, $C_1$, $K_2$, $C_2 >0 $ there exists an automorphism, 
$\phi \in \Out$ such that, for any $(K_1, C_1)$--quasi-geodesic $p \from [0,m] \to \Out$ 
with $p(0) = id$ and $p(m) = \phi$, the shadow $\Theta_\calA \circ p$ of $p$ 
in $\calF(A)$ is not a $(K_2, C_2)$--reparameterized quasi-geodesic.
\end{introthm}

That is to say, there is $\phi \in \Out$  and a free factor $\calA$, such that every quasi-geodesic 
connecting $\phi$ to the identity backtracks in $\calF(\calA)$. In other words, 
there does not exists a quasi-geodesics between the identity and $\phi$ that 
projects to a quasi-geodesics in $\calF(\calA)$. Same is true for $\calS(\calA)$.

Another application of \thmref{Intro:slowtwist} is in the understanding of relationships 
between Outer Space geodesics and $\Out$ geodesics. The Outer Space, denoted 
$\CV$, is the set of metric graphs whose fundamental group is identified with $\F$.
It is a $CW$--complex with $\Out$ action and it was defined by Culler and Vogtmann
to study $\Out$ as an analogue of \Teich space which has $\Mod(S)$ action
\cite{outerspace}. One can project a path in $\CV$ to a path in $\Out$, by considering 
the associated difference of markings maps along a path in $\CV$. Bestvina and 
Feighn \cite{BFhyperbolicity} showed that greedy folding paths in $\CV$ projects to 
quasi-geodesics 
in free factor graphs for all sub-factors. This was used to produce a weak version
of a distance formula which give a lower bound for the word length in terms
of projection distance to $\calS(\calA)$ \cite{BBF}. 

\begin{figure}
\begin{tikzpicture}[
  node distance = 2.1cm,
  bubble/.style = {draw, ellipse, minimum width = 1.5cm, minimum height = 1.0cm},
  line/.style = {-latex'},
  ]
  \node(B0){$\Out$};
  \node[ right = of B0] (B1) {$\CV$};
  \node[ right = of B1] (B2) {$\calS(\calA)$};
  \node[below = of B2] (B3) {$\calF(\calA)$};
  \draw [line](B1) to [bend left=50] (B0.south) node[below, xshift=47, yshift=-22]{\tiny difference of markings to $R_0$};
  \draw [line](B0)--(B1.west) node[below, xshift=-30]{\tiny orbit of $R_{0}$};
  \draw [line] (B1) -- (B2.west)node[below, xshift=-27]{\tiny forget the metric};
  \draw [line] (B1) -- (B3.west)node[right, xshift=17, yshift=40]{\tiny projection map $\pi$};
\draw [line] (B2.south) -- (B3.north)node[below, xshift=-50, yshift=30, rotate=-47]{\tiny shortest primitive loop};
\end{tikzpicture}
\caption{A path in either $\Out$ or $\CV$ has an associated shadow path to 
other spaces via the maps described here.}
\label{Fig:shadow}
\end{figure}

However, it follows from \thmref{Intro:backtrack} that shadows of greedy folding paths
are not quasi-geodesics in $\Out$.

\begin{introthm} \label{Intro:CV}
The shadow of geodesics in $\CV$ do not in general behave well in $\Out$.
More specifically,
\begin{enumerate}[(i)] 
\item  For given constants $K$ and $C$, there are points $x, y \in \CV$ such that for every 
geodesic $[x, y]_{\CV}$ in $\CV$ connecting $x$ to $y$, its image in $\Out$ is not 
$(K, C)-$quasi-geodesic. 

\item There are points $x$ and $y$ in the thick part of $\CV$,  such that they are connected by a greedy
folding path whose shadow in $\Out$ is not a quasi-geodesic. 

\end{enumerate} 
\end{introthm}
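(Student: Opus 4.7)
The plan is to deduce both parts of \thmref{Intro:CV} from \thmref{Intro:backtrack}, using the Bestvina--Feighn projection theorem \cite{BFhyperbolicity} as a bridge: greedy folding paths in $\CV$ project to quasi-geodesics in $\calF(\calA)$ for every sub-factor $\calA$. The core observation is that if the shadow of a $\CV$-geodesic were a quasi-geodesic in $\Out$, then its projection to $\calF(\calA)$ would also be a quasi-geodesic (since the $\Out \to \calF(\calA)$ projection factors coarsely through $\CV \to \calF(\calA)$), which contradicts \thmref{Intro:backtrack}.

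Concretely, let $\phi \in \Out$ and $\calA$ be as in \thmref{Intro:backtrack}. For part (ii), I would take $x$ in the thick part of $\CV$ represented by a rose with equal edge lengths and set $y = \phi \cdot x$, which is also thick since the thick part is $\Out$-invariant. A greedy folding path $\gamma$ from $x$ to $y$, arranged to stay in the thick part by suitable selection of $\phi$ within the family used in \thmref{Intro:backtrack}, has projection to $\calF(\calA)$ that is a quasi-geodesic by \cite{BFhyperbolicity}. The shadow $\tilde\gamma \subset \Out$ of $\gamma$, obtained by reading off markings along $\gamma$, has coarsely the same image in $\calF(\calA)$ as $\gamma$ and is therefore a quasi-geodesic there. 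If $\tilde\gamma$ were a quasi-geodesic in $\Out$, this would violate \thmref{Intro:backtrack}.

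Part (i) follows the same template with the same $\phi$ and with $x = R_0$, $y = \phi(R_0)$, but now the hypothetical $\CV$-geodesic $\gamma$ is arbitrary rather than a fixed greedy folding path. To close the argument I must know that every $\CV$-geodesic between these points projects to a quasi-geodesic in $\calF(\calA)$. In the asymmetric Lipschitz metric on $\CV$, every directed geodesic is (up to reparametrization) a folding path, so the Bestvina--Feighn machinery applies and $\pi_{\calF(\calA)}(\gamma)$ is a quasi-geodesic. The rest of the argument is identical to part (ii).

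The main obstacle is exactly this last point: extending the Bestvina--Feighn projection theorem from greedy folding paths to the arbitrary $\CV$-geodesics needed for part (i), together with verifying the coarse compatibility of the projections $\CV \to \calF(\calA)$ and $\Out \to \calF(\calA)$ with the shadow map. Both should follow from routine features of the constructions, but each requires care. The thick-part requirement in part (ii) is a further technicality that I would handle by choosing $\phi$ carefully within the family produced by \thmref{Intro:backtrack} so that the folding path connecting $R_0$ to $\phi(R_0)$ does not develop a short loop.
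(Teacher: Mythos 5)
Your high-level strategy is sound: use the Bestvina--Feighn projection theorem as a bridge and play it off against \thmref{Intro:backtrack}. For part~(ii) this is close to the paper's intent, and for part~(i) the idea of anchoring everything in the projection to $\calF(\calA)$ is also in the right spirit. However, there are two concrete gaps, and the paper resolves each by a construction you have not supplied.

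For part~(ii), the choice ``$x = R_0$ with equal edge lengths, $y = \phi \cdot x$ with $\phi$ from \thmref{Intro:backtrack}'' does not produce a greedy folding path, because the $\phi$ of \thmref{Intro:backtrack} fixes $a$ and $b$. The optimal difference-of-markings map $R_0 \to \phi(R_0)$ stretches only the $c$-edge, so $\green(x,y)$ is a proper subgraph of $x$, and no greedy folding path from $x$ to $y$ exists. One can repair this by shrinking the $a$- and $b$-edges in $x$ until they are maximally stretched, but then $x$ leaves every fixed thick part as $\phi$ gets long, which is exactly what part~(ii) forbids. The phrase ``arranged to stay in the thick part by suitable selection of $\phi$'' glosses over the fact that no $\phi$ in the family of \thmref{Intro:backtrack} can work: they all fix $\calA$ pointwise. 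The paper instead builds a \emph{different} automorphism, $a \mapsto \psi^n(a)$, $b \mapsto \psi^n(b)$, $c \mapsto c\,\alpha_1^{n_1}\cdots\alpha_k^{n_k}$, so that all three edges of the source rose are stretched by comparable factors; this simultaneously forces $\green(x,y)=x$ and, via \eqnref{Eq:Ratios}, keeps every edge length at least $1/7$. The quantitative conclusion is then obtained by re-running the machinery of \thmref{slowchange} rather than by citing \thmref{Intro:backtrack} as a black box, because the hypotheses of \thmref{Intro:backtrack} are not literally met by the new $\phi$.

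For part~(i), your argument relies on the claim that every directed geodesic in the Lipschitz metric on $\CV$ is, up to reparametrization, a folding path. This is not true in general (for instance a geodesic may spend time rescaling edges outside the tension subgraph), and without it you cannot conclude that the $\calF(\calA)$-projection of an arbitrary $\CV$-geodesic is a quasi-geodesic, which is what you need to invoke \thmref{Intro:backtrack} for \emph{every} geodesic. The paper avoids the issue entirely: it chooses edge lengths on $x$ and $y$ so that all three edges are stretched by exactly the same factor, and then invokes \cite{convexball} to conclude that the $\CV$-geodesic from $x$ to $y$ is \emph{unique} and folds the $w$-edge around $\calA$. With uniqueness in hand, one need only verify that the shadow of this single geodesic fails to be a quasi-geodesic, which reduces to the twisting estimate as in \thmref{Thm:Stallings-bad}. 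Your approach requires a stronger, and I believe false, structural statement about $\CV$-geodesics that the paper's construction is specifically engineered to sidestep.
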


Our methods do not say anything about the projection of quasi-geodesics in $\Out$ 
to $\calS(\F)$ or $\calF(\F)$. It is interesting to know if a geodesic in $\calS(\F)$ 
can be used as a guide to construct efficient paths in $\Out$. 

\begin{introques}
For a given $\phi \in \Out$, does there always exists a quasi-geodesic in 
$\Out$ connecting $\phi$ to the identity whose projection to $\calS(\F)$
is also a quasi-geodesic?
\end{introques} 

\subsection*{Acknowledgements}
The second author was partially supported by Discovery Grants from the 
Natural Sciences and Engineering Research Council of Canada (RGPIN 06486).  
We also thank the referee for many helpful comments. 

\section{Background} 
\subsection{Labeled graphs} \label{Sec:Label}
Recall from the introduction that a labeled graph $x$ induces an isomorphism 
$\mu \from \pi_1(x) \to \F$ called a \emph{marking}. Two labeled graphs $x$ and $x'$ are 
equivalent if there is a graph automorphism $f \from x \to x'$ such that 
the following diagram commutes up to free homotopy.
\[
\begin{tikzcd}
\pi_1(x'\,) \arrow{r}{\mu'} & \F \\
\pi_1(x) \arrow{ru}[below]{\ \mu} \arrow{u}{f^\star} &
\end{tikzcd}
\]
Let $w$ denote an element of $\F$. We refer to a conjugacy 
class $[w]$ of $w$ as a \emph{loop}. For any labeled graph $x$, 
and any loop $\alpha$, there is an immersion of a circle in $x$ representing $\alpha$
which (abusing the notation) we also denote by $\alpha$. We always assume 
this immersion to be the shortest in its free homotopy class in terms of the number 
of edges. The number of edges of a loop $\alpha$ in the given marked graph 
$x$ is denoted $\ell_x(\alpha)$, and is called the \emph{combinatorial length} of 
$\alpha$ in $x$. 

Sometimes it is more convenient to work with the universal cover of a marked graph. 
The universal cover of $x$ is an $\F$-tree (A simplicial tree with free $\F$ action).  
Given such a tree $T$, 
an element $w \in \F$ in the conjugacy class $\alpha$ acts hyperbolically on $T$, 
and we use $\axis_T(w)$ to denote the axis of its action. Consistent with the definition 
of combinatorial lengths in the graphs, we use $\ell_T(w)$ to denote the number of 
edges in a fundamental domain of the action of $w$. Note that this is independent 
of the choice of $w \in \alpha$. Hence, we can also use the notation $\ell_T(\alpha)$ 
which is equal to $\ell_x(\alpha)$. 

\subsection{Free Factor and Free Splitting Graphs}\label{FFandFS}
There are several analogues of the curve graph in the setting of $\Out$. 
The two important ones are the \emph{free factor graph} $\calF(\F)$ and
the free splitting graph $\calS(\F)$. 
 
A \emph{free factor} $\calA$ of $\F$ is a subgroup such that there exists another subgroup 
$\calB$ where
\[ \F = \calA \ast \calB.\]
A free factor $\calA$ is \emph{proper} in $\F$ if the rank of $\calA$ is strictly less than $n$. 
Then $\calF(\F)$ is a graph whose vertices are conjugacy classes of proper free factors of 
$\F$ and edges are associated to pairs of proper free factors where one is contained in 
the other. Similarly for each 
free factor $\calA$, one defines the free factor graph $\calF(\calA)$ whose vertices 
are conjugacy classes of proper free factors of $\calA$. 

A \emph{free splitting} over $\F$ is a minimal, simplicial (but possibly not free) 
action of the group $\F$ on a simplicial tree $T$ with trivial edge stabilizer. 
Then $\calS(\F)$ is a graph whose vertices are free splittings of $\F$ 
up to an equivariant isometry and two splittings are connected by an edge if one can 
be obtained from the other by a collapse map (see \cite{HM} for more details). As above, 
for any free factor $\calA$, $\calS(\calA)$ denotes the free splitting graph of $\calA$. 

There is a projection map 
\[
\pi \from \calS(\F) \to \calF(\F)
\]
defined as follows. Let $\alpha$ be a \emph{primitive} loop, that is, $\alpha = [w]$
and $\langle w \rangle$ is a free factor. Then, for any free splitting $T$, the translation
length of $\alpha$ in $T$, $\ell_T(\alpha)$, can be defined as before but
may be zero. We define $\pi(T) = \alpha$ where $\alpha$ is the primitive loop with 
shortest translation length. Similarly, there is a projection map 
$\calS(\calA) \to \calF(\calA)$ which we also denote by $\pi$. 
These maps are coarsely well defined (see \cite{BFhyperbolicity}) and further we have

\begin{theorem}[\cite{KR}]
The projection under $\pi$ of a quasi-geodesic in $\calS(\calA)$ is a re-parameterized
quasi-geodesic in $\calF(\calA)$. 
\end{theorem}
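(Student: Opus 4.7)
The plan is to reduce the statement to the case of geodesics and then use hyperbolicity of both spaces to extend to all quasi-geodesics. Since $\calS(\calA)$ and $\calF(\calA)$ are both Gromov hyperbolic, a standard Morse-lemma-style consequence of hyperbolicity guarantees that the conclusion for quasi-geodesics follows from the two assertions: (a) $\pi$ is coarsely Lipschitz, and (b) $\pi$ sends geodesics in $\calS(\calA)$ to reparameterized quasi-geodesics in $\calF(\calA)$.

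To prove (a) I would analyze a single edge of $\calS(\calA)$, which corresponds to a collapse $T \to T'$. A primitive loop realizing the shortest translation length in $T$ is represented by a loop in $T'$ whose length differs by at most the contribution of the collapsed edges, so the free factor associated by $\pi$ to $T$ and to $T'$ either coincide or can be put simultaneously inside a common small-complexity splitting, hence are at bounded distance in $\calF(\calA)$. For (b) the strategy is to replace an arbitrary geodesic in $\calS(\calA)$ by a greedy folding path. Bestvina--Feighn \cite{BFhyperbolicity} showed that the $\calF(\calA)$-projection of a greedy folding path is a reparameterized quasi-geodesic. On the other hand, the proof of hyperbolicity of $\calS(\calA)$ by Handel--Mosher \cite{HM} exhibits folding paths as a thin family of uniform quasi-geodesics, so any geodesic between two free splittings has bounded Hausdorff distance from an actual folding path. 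Composing these facts, the $\pi$-image of the geodesic is a bounded perturbation of the $\pi$-image of the folding path, and such a perturbation of a reparameterized quasi-geodesic in a hyperbolic space is again a reparameterized quasi-geodesic.

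With (a) and (b) in hand, an arbitrary quasi-geodesic $\gamma$ in $\calS(\calA)$ fellow-travels some geodesic $\gamma'$ by stability of quasi-geodesics, and $\pi(\gamma)$ and $\pi(\gamma')$ have bounded Hausdorff distance by (a); since $\pi(\gamma')$ is a reparameterized quasi-geodesic by (b), so is $\pi(\gamma)$. The main obstacle is part (b), and specifically the comparison step: one needs the statement, extracted from the Handel--Mosher hyperbolicity argument, that every $\calS(\calA)$-geodesic is \emph{uniformly Hausdorff-close} to a folding path, which is stronger than the bare thin-triangle property. A secondary subtlety will be verifying that no backtracking occurs along the folding-path parameterization after projection, so that one obtains a \emph{reparameterized} quasi-geodesic rather than merely a set-wise quasi-geodesic image.
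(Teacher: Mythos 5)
The paper does not prove this theorem; it is cited from Kapovich--Rafi \cite{KR}, so there is no in-paper argument to compare yours against. Judging the sketch on its own terms: the Morse-stability reduction from quasi-geodesics to geodesics is standard and fine, and the coarse-Lipschitz claim (a) is unproblematic.

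For (b), the argument as written has a real seam that needs sewing. You cite Bestvina--Feighn \cite{BFhyperbolicity} for the projection of a greedy folding path to $\calF(\calA)$ being an unparameterized quasi-geodesic, and Handel--Mosher \cite{HM} for $\calS(\calA)$-geodesics being Hausdorff-close to fold paths. But the greedy folding paths of \cite{BFhyperbolicity} live in Culler--Vogtmann Outer Space and carry metric data, whereas the fold paths witnessing hyperbolicity in \cite{HM} are combinatorial paths in the free splitting complex; they are cousins, not the same object, and the translation ensuring that the Handel--Mosher path you conjure actually enjoys a Bestvina--Feighn-type projection estimate is precisely what needs to be supplied. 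The backtracking subtlety you flag at the end is, by contrast, bridgeable: the quantitative Morse lemma in $\calS(\calA)$ gives a coarsely monotone fellow-traveling reparameterization between the geodesic and the fold path, which is exactly what is needed to promote Hausdorff-closeness to the unparameterized-quasi-geodesic conclusion --- but this should be stated rather than waved at.

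It is also worth noting that \cite{KR} takes a structurally different route. Rather than bootstrapping off the Bestvina--Feighn fold-path projection theorem --- which is the technical engine of \cite{BFhyperbolicity}'s own proof that $\calF$ is hyperbolic, the very result \cite{KR} gives a shorter alternative proof of --- Kapovich--Rafi formulate and prove a general criterion: if $f\colon X\to Y$ is Lipschitz and coarsely surjective, $X$ is hyperbolic, and the $f$-images of $X$-geodesics satisfy a uniform bounded-backtracking condition in $Y$, then $Y$ is hyperbolic and those images are reparameterized quasi-geodesics. They then verify that condition directly for $\pi\colon\calS(\calA)\to\calF(\calA)$. Your proposal treats the Bestvina--Feighn projection result as a black box and is therefore logically downstream of the harder input; the \cite{KR} route is self-contained modulo Handel--Mosher, which is part of its point.
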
 

\begin{remark}
The above theorem implies that, given a path $p$ in $\Out$, if its projection to the free 
factor graph is not a quasi-geodesic, then its projection to the free splitting graph was 
also not a quasi-geodesic.
\end{remark}

Let $x$ be a labeled graph and $T$ be the universal cover. For a proper free factor 
$\calA$ let $T|\calA$ be the minimal $\calA$--invariant subtree of $T$. Note that
$T|\calA \in \calS(\calA)$.  Also recall that the Outer Space 
(see \cite{outerspace} and the discussion in \secref{Sec:Outer-Space}) denoted 
$\CV$, is the set of volume 1 metric graphs whose fundamental group is identified with $\F$. Letting $T_0$ be the universal cover $R_0$, the labeled 
rose fixed in the introduction, we define a shadow map as follows:
\[
\Theta_\calA^\calS \from \Out \to \calS(\calA)
\qquad 
\phi \to  \phi(T_0)|\calA.
\]
That is, we change the action on $T_0$ according to $\phi$ and take the minimal 
$\calA$--invariant subtree. Composing with $\pi \from  \calS(\calA) \to \calF(\calA)$
we can define a shadow map to $\calF(\calA)$, 
\[
\Theta_\calA^\calF \from \Out \to \calF(\calA)
\qquad 
\phi \to  \pi \big( \Theta^\calS_\calA(\phi) \big).
\]
The map $\Theta_\calA^\calF$ is used more often and hence we shorten the notation
to $\Theta_\calA$. In fact, the shadow to $\calF(\calA)$ makes sense for all marked 
graphs. That is, we can define a map 
\[
\Theta^{*}_\calA \from \CV \to \calF(\calA)
\qquad 
\Theta^{*}_\calA(x) := \pi(T|\calA),
\]
where $T$ is the universal cover of $x$. Note that since the image  of $\Theta^{*}_\calA$ does not depend on the metric of the graph in question, it makes sense to write
$\Theta^{*}_\calA(x)$ even when $x$ is a marked graph with no metric. 
 
 We recall the lemma in \cite{BFhyperbolicity} on the upper bound for
the distance in the free factor graph. It states that if a loop $\alpha \in \calA$ is short
in $x$ then the shadow of $x$ in $\calF(\calA)$ is near $\alpha$. 

\begin{lemma} [Lemma 3.3 in \cite{BFhyperbolicity}] \label{Laddy}
Let $\calA$ be a proper free factor and $\alpha$ be a 
primitive element in $\calA$. Let $x$ be a marked graph so that 
$\ell_x(\alpha) \leq L$. Then 
\[ d_{\calF(\calA)} \big( \Theta^{*}_\calA(x), \alpha \big) \leq 6L + 13. \]
\end{lemma}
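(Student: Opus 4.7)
The plan is to show that both $\alpha$ and $\beta := \Theta_\calA(x)$ can be realized as immersed loops of length at most $L$ in a common marked graph for $\calA$, and then to bound the distance in $\calF(\calA)$ between two such short primitive loops.

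First, let $T$ be the universal cover of $x$ and let $T|\calA$ be its minimal $\calA$-invariant subtree. Since $\alpha \in \calA$, the axis of $\alpha$ in $T$ is already contained in $T|\calA$, so
\[
\ell_{T|\calA}(\alpha) = \ell_T(\alpha) = \ell_x(\alpha) \leq L.
\]
By definition of the projection $\pi \colon \calS(\calA)\to \calF(\calA)$, the loop $\beta = \pi(T|\calA)$ is a primitive element of $\calA$ of minimal translation length in $T|\calA$, and therefore $\ell_{T|\calA}(\beta)\leq \ell_{T|\calA}(\alpha)\leq L$. Passing to the quotient marked graph $y = (T|\calA)/\calA$ (whose fundamental group is $\calA$), both $\alpha$ and $\beta$ are realized as immersed loops in $y$ of combinatorial length at most $L$.

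Next I would interpolate between $\alpha$ and $\beta$ in $\calF(\calA)$. The union $\alpha\cup\beta$ traces out a subgraph $y'\subseteq y$ with at most $2L$ edges. I would construct a finite sequence $\alpha=\gamma_0,\gamma_1,\ldots,\gamma_k=\beta$ of primitive loops in $\calA$ with the property that each consecutive pair $\gamma_i,\gamma_{i+1}$ lies in a common proper free factor $\calB_i\subsetneq\calA$ of small rank. The natural way to do this is to run a Stallings-style folding sequence on $y'$ in which one repeatedly replaces one of the two loops by a Nielsen/Whitehead transform along an edge it shares with the other; each such move produces a new primitive loop contained, together with its predecessor, in a common rank-two free factor. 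The bridge vertex $\calB_i$ then gives the estimate $d_{\calF(\calA)}(\gamma_i,\gamma_{i+1})\leq 2$, and the total number of moves needed is linear in the sum of lengths $\ell_{y}(\alpha)+\ell_{y}(\beta)\leq 2L$. Careful tallying of the steps produces the explicit bound $6L + 13$.

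The main obstacle is pinning down the explicit affine constant $6L+13$ rather than just a qualitative $O(L)$ bound. The qualitative statement follows almost formally from the coarse Lipschitz property of the composite shadow map and the fact that both loops are short in the same tree; but the precise coefficient $6$ and the additive $+13$ require a careful edge-by-edge count in $y'$, together with the handling of degenerate cases (such as $\alpha=\beta$, or when $\ell_x(\alpha)$ is so small that the immersed image of $\alpha$ does not fill a full rank-two subgraph) and the usual coarse ambiguity in the definition of $\pi$.
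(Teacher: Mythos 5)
This lemma is not proved in the paper at hand: it is quoted from Bestvina--Feighn (Lemma~3.3 of \cite{BFhyperbolicity}), so there is no internal proof to compare your attempt against; what follows is a review of your sketch on its own merits.

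Your first paragraph is correct and is the right normalization. Since $\alpha \in \calA$, its axis lies in every $\calA$-invariant subtree, so $\ell_{T|\calA}(\alpha) = \ell_T(\alpha) = \ell_x(\alpha) \leq L$. By the paper's definition of $\pi$, the loop $\beta = \Theta_\calA(x)$ minimizes translation length among primitive elements of $\calA$, hence $\ell_{T|\calA}(\beta) \leq L$ as well, so both are short primitive loops in the marked graph $y = (T|\calA)/\calA$ for $\calA$.

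The remainder has genuine gaps. The proposed interpolation --- a chain of Whitehead-type moves $\alpha = \gamma_0, \gamma_1, \dots, \gamma_k = \beta$ with each consecutive pair in a common proper rank-two free factor of $\calA$ --- is asserted but not constructed. You do not verify that a single fold on the subgraph $y'$ (which has at most $2L$ edges) really replaces $\gamma_i$ by a \emph{primitive} $\gamma_{i+1}$ sharing a \emph{proper} rank-two free factor with it, nor that the number of such moves is bounded linearly in $L$ with the required coefficients. You also leave untreated the case where $\calA$ has rank two (so $\calA$ has no proper rank-two free factors and the ``bridge vertex'' $\calB_i$ you invoke does not exist) and the degenerate cases you yourself list. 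Most importantly, and as you concede, the explicit constant $6L+13$ is never derived --- yet that affine bound is exactly what the present paper uses, since its later argument converts a lower bound on $d_{\calF(\calA)}$ into a quantitative lower bound on $\ell_{R_i}(\alpha)$. As written this is an outline whose hardest step is labelled ``careful tallying.'' If you want a self-contained proof you must actually carry out the edge count and handle the low-rank and degenerate cases; otherwise do what the paper does and cite Bestvina--Feighn.
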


\subsection{Intersection Core}
To define the relative twisting, we first need to introduce the Guirardel 
Core associated to a pair of $\F$--trees. We give a characterization of the $2$--skeleton 
of the Guirardel Core that is different from (but equivalent to) the one given in 
\cite{Guirardelcore}. Given an $\F$--tree $T$, let $\mf{0}$ be a fixed vertex of $T$
that we call the base-point. We refer to the vertex $w(\BP)$ of $T$, $w \in \F$, 
simply by $w_T$ and we refer to an edge by a pair of words $(w, ws)_T$ where $s$ is 
the \emph{label} of the oriented edge. We say the edge $(w, ws)_T$ is an 
\emph{$s$-edge}. We say an $s$--edge $(w, ws)_T$ is \emph{preceded} by a 
$t$--edge $(u, ut)_T$ if $w=ut$. 

There is a one-to-one correspondence between the set of infinite geodesic rays originating 
from $\BP$ and the set of infinite freely reduced words in $\F$. Hence, the Gromov 
boundary of the group $\partial \F$, i.e. the equivalence class of quasi-geodesics 
rays in a Cayley graph $\F$, can be identified with the set of all geodesic rays 
starting from $\BP$ which in turn can be identified with the Gromov boundary 
$\partial T$ of $T$. An (oriented) edge $e= (w, ws)_T$ in $T$ defines a decomposition of 
$\partial \F$ into two sets in the following way: a vertex $v_T$ in $T$ is in \emph{front} of 
$e$ if the geodesic connecting $v_T$ and $(ws)_T$ does not contain the edge $e$; likewise 
a vertex $v_T$ is \emph{behind} $e$ if the geodesic connecting $v_T$ and $w_T$ does 
not contain $e$.  Let $\partial^+(e)$ be the set of all the geodesic rays originating from 
$\BP$ that eventually lie in front of $e$ and $\partial^-(e)$ be set of all geodesic rays 
originating from $\BP$ that eventually lie behind $e$. 

Note that the sets $\partial^{\pm}(e)$ are independent of the choice of $\BP$.  Also, 
\[
\partial^+(e) \cup \partial^-(e) = \partial \F
\qquad\text{and}\qquad 
\partial^+(e) \cap \partial^-(e) = \emptyset
\]
That is, $e$ induces a partition of $\partial \F$. 

\begin{definition}\label{Definition:Partition}
Consider an edge $e_1$ in $T_1$ and an edge $e_2$ in $T_2$. We say 
$e_1$ and $e_2$ are \emph{boundary equivalent} if they induce the same partition 
of $\partial \F$. We say $e_1 \times e_2$ is an \emph{intersection square} 
if all of the following four intersections, as subsets of $\partial \F$, are nonempty:
 \begin{align*}
 \partial^+(e_1) &\cap \partial^+(e_2) \neq \emptyset 
 &\partial^+(e_1) &\cap \partial^-(e_2)\neq \emptyset  \\
 \partial^-(e_1) &\cap   \partial^+(e_2)\neq \emptyset 
 &\partial^-(e_1) &\cap   \partial^-(e_2)\neq \emptyset 
 \end{align*}
\end{definition}
It follows from definition that a pair of boundary equivalent edges never form an intersection square.
\begin{example}
Consider the pair of trees $T_1$ and $T_2$ that are covers of roses 
$R_1$ and $R_2$ with edge labels $\langle  a, a^{2}b \rangle$ and 
$\langle a, b\rangle$ respectively. Let $\BP_1$ be the base point in $T_1$ and $\BP_2$ 
be the base points in $T_2$. 
 \begin{figure}[h!]
\begin{subfigure}{.4\textwidth}
\begin{tikzpicture}
 \tikzstyle{vertex} =[circle,draw,fill=black,thick, inner sep=0pt,minimum size=.5 mm]
 
[thick, 
    scale=1,
    vertex/.style={circle,draw,fill=black,thick,
                   inner sep=0pt,minimum size= .5 mm},
                  
      trans/.style={thick,->, shorten >=6pt,shorten <=6pt,>=stealth},
   ]
     \tikzstyle{vertexr} =[circle,draw,fill=red,thick, inner sep=0pt,minimum size=.9 mm]
 
[thick, 
    scale=1,
    vertex/.style={circle,draw,fill=red,thick,
                   inner sep=0pt,minimum size= .9 mm},
                  
      trans/.style={thick,->, shorten >=6pt,shorten <=6pt,>=stealth},
   ]

    \node[vertexr] (a) at (0, 0) {};
    \node[vertex] (b) at (1,0) {};
     \node[vertex] (c) at (2,0) {};
      \node[vertex] (d) at (0,1) {};
\draw [->, red](a)--(b);
\draw [->](b)--(c);
\draw [->](a)--(d);
\node at (-.08, -0.3){$\BP_1$};
\node at (2.8, -0.5){$T_1$};
\node at (0.5, 0.2){$a$};
\node at (1.5, 0.2){$a$};
\node at (0.3, 0.5){};
\node at (-0.4, 0.6){$a^2 b$};
\node at (0.5, -0.2){$e_{1}$};
\end{tikzpicture}
\end{subfigure}
\begin{subfigure}{.4\textwidth}
\begin{tikzpicture}
 \tikzstyle{vertex} =[circle,draw,fill=black,thick, inner sep=0pt,minimum size=.5 mm]
 
[thick, 
    scale=1,
    vertex/.style={circle,draw,fill=black,thick,
                   inner sep=0pt,minimum size= .5 mm},
                  
      trans/.style={thick,->, shorten >=6pt,shorten <=6pt,>=stealth},
   ]
   
   \tikzstyle{vertexr} =[circle,draw,fill=red,thick, inner sep=0pt,minimum size=.9 mm]
 
[thick, 
    scale=1,
    vertex/.style={circle,draw,fill=red,thick,
                   inner sep=0pt,minimum size= .9 mm},
                  
      trans/.style={thick,->, shorten >=6pt,shorten <=6pt,>=stealth},
   ]

    \node[vertexr] (a) at (0, 0) {};
    \node[vertex] (b) at (1,0) {};
     \node[vertex] (c) at (2,0) {};
      \node[vertex] (d) at (0,1) {};
\draw [->](a)--(b);
\draw [->, red](b)--(c);
\draw [->](a)--(d);
\node at (-.08, -0.3){$\BP_2$};
\node at (2.8, -0.5){$T_2$};
\node at (0.5, 0.2){$a$};
\node at (1.5, 0.2){$a$};
\node at (1.5, -0.2){$e_{2}$};
\node at (-0.2, 0.6){$b$};
\end{tikzpicture}
\end{subfigure}
\caption{The edges $e_1$ in $T_1$ and $e_2$ in $T_2$ form an intersection square.}
\end{figure}
 
We check that the edge $e_{1}=({\rm id}, a)_{T_1}$ and 
$e_{2}=(a, a^2)_{T_2}$ form an intersection square by producing the 
beginning of the infinite reduced words that lie in each of the intersections:
\begin{align*}
 \partial^+(e_1) &\cap \partial^+(e_2): a^{2} b^{-1} \dots 
 &\partial^+(e_1) &\cap \partial^-(e_2):a(a^{2}b)^{-1}aa \dots =ab^{-1} \dots\\
 \partial^-(e_1) &\cap   \partial^+(e_2):a^{2}b \dots
 &\partial^-(e_1) &\cap   \partial^-(e_2):b^{-1} \dots
 \end{align*}
Intuitively, this is because the edge $({\rm id}, a^2b)_{T_1}$ in $T_1$ that is behind 
$e_1$ maps to an edge path that passes $e_2$, but $e_1$ maps to an edge
that is behind $e_2$. Therefore, $e_1$ and $e_2$ are tangled to each other. 
\end{example}

Let $\overline{\Core(T_1, T_2)}$ be the sub-complex of $T_1 \times T_2$ that is the 
union of all intersection squares:
\[
\overline{\Core(T_1, T_2) }= 
\Big\{ e_1 \times e_2  \, \big|\,e_i \in T_i, \quad e_1 \times e_2 \text{  is an intersection square} \Big\}
\]
We define the \emph{intersection core}, or simply the \emph{core} of $T_1, T_2$,
to be:
\[
\Core(T_1, T_2) = \overline{\Core(T_1, T_2)} / \F
\]
It follows from Lemma 3.4 in \cite{uniformsurgery} that the above definition is the 
same as the definition given by Guirardel. 

It is clear from the definition that the core is symmetric: $\Core(T_1, T_2)$ is 
isomorphic to $\Core(T_2, T_1)$. 
For an edge $e_2 \in T_2$, the \emph{$e_2$--slice} of the 
intersection core is the subtree in  $T_1$ that is a collection of edges that form 
intersection squares with $e_2$:
\[
C_{e_2}(T_1) = 
  \Big\{ e \in T_1 \,\Big|\, e \times e_2 \text{ is a square in } 
  \overline{\Core(T_1, T_2)} \Big\}.
\]
Guirardel showed \cite{Guirardelcore} that the $e_2$--slice is always convex
and finite. That is, if two edges $e, e' \in T / \F$ 
are in a given slice, then all the edges on the geodesic path in $T_1/\F$ connecting 
$e$ and $e'$ are also in the slice.

\subsection{Stallings folding path}\label{Sec:Fold}
We describe Stallings folding path here as needed in this paper; for full generality, 
see ~\cite{stallingsfolding}. Let $x$ be a labeled directed graph. 
Then the edges of $x$ can be subdivided into \emph{edgelets} where each
edgelet is labeled with an element in the fixed basis $\{s_1, \dots, s_n\}$ of $\F$
and the concatenation of edgelets into an edge yields the original labeling of the edge. 
A \emph{marking-directed fold} from $x$ to $x'$ is a map from $x$ to $x'$ that identifies two 
edgelets  $e_1, e_2$ for which both of the following are satisfied:
\begin{enumerate}[(i)]
\item $e_1$ and $e_2$ share the same origin vertex.
\item $e_1$ and $e_2$ shares the same label
\end{enumerate}
The resulting quotient map from $x$ to $x'$ is a homotopy equivalence respecting the
markings.  We recall Stallings' Folding Theorem per the context of this 
paper(~\cite{stallingsfolding}).

\begin{theorem}[Stallings' Folding Theorem~\cite{stallingsfolding}] 
For any labeled graph $x$, there exists a finite sequence of marking-directed folds
$x = x_m \rightarrow \dots  \rightarrow x_1 \rightarrow x_0=R_0$ 
connecting $x$ to $R_0$.
\end{theorem}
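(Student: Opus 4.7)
The plan is to prove the theorem by strong induction on the number of edgelets of $x$. The labeling of edgelets by elements of the fixed basis $\{s_1,\dots,s_n\}$ of $\F$ determines a canonical combinatorial map $f\from x \to R_0$ that sends each edgelet to the corresponding petal of $R_0$; by the very definition of a labeled graph, the induced homomorphism $f_\ast$ on $\pi_1$ agrees up to conjugation with the marking $\mu$, and is therefore an isomorphism.

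I would split into two cases according to whether $x$ admits a foldable pair. If no foldable pair exists, then at every vertex the outgoing edgelets carry pairwise distinct labels, and likewise (after reversing orientations) the incoming edgelets. Hence $f$ is a local embedding, that is, an immersion of finite graphs into $R_0$. An immersion of finite graphs that induces an isomorphism on $\pi_1$ is a covering of degree one, hence a graph isomorphism; therefore $x$ is equivalent to $R_0$ in the sense of Section~\ref{Sec:Label}, and the required folding sequence has length zero.

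Otherwise, let $e_1,e_2$ be a foldable pair, sharing origin $v$ and label $s$. A key preliminary is that $e_1$ and $e_2$ cannot share a terminal vertex: if they did, the loop $e_1\bar e_2$ would be a nontrivial element of $\pi_1(x)$ mapping to $s\bar s=1$ in $\F$, contradicting injectivity of $\mu$. Hence $e_1\cup e_2$ is a tree in $x$, and identifying $e_1$ with $e_2$ via the label parametrization yields a quotient $\fold\from x \to x'$ that is a homotopy equivalence to a new labeled graph $x'$ whose labeling is inherited from $x$. The composition $\mu'=\mu \circ \fold_\ast^{-1}$ is then an isomorphism marking on $x'$, and $x'$ has strictly fewer edgelets than $x$. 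Applying the inductive hypothesis to $x'$ yields a folding sequence $x'\to\cdots\to R_0$, which we prepend with $\fold$ to obtain the desired sequence for $x$.

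The main obstacle is the immersion-to-isomorphism step: one must recognize that a $\pi_1$-isomorphic immersion of finite graphs into the rose $R_0$ is actually a graph isomorphism — first a covering, by the immersion property together with the local structure of $R_0$, and then of degree one by the $\pi_1$-isomorphism. The verification that the fold in the inductive step is a homotopy equivalence is handled by the observation that $e_1 \cup e_2$ is a tree (which relies crucially on the injectivity of $\mu$ to rule out type-II folds that would drop the rank of $\pi_1$). Termination is automatic since the number of edgelets is a nonnegative integer strictly decreasing at each fold.
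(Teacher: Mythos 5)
The paper does not actually prove this theorem; it cites it directly from Stallings. Your reconstruction follows the standard Stallings strategy (induction on the number of edgelets, with an immersion characterization in the base case), which is the right skeleton, but two of the key steps are not justified correctly.

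In the base case, you claim the $\pi_1$--isomorphic immersion $f\from x\to R_0$ is a covering ``by the immersion property together with the local structure of $R_0$.'' That is false as stated: an immersion into the rose is only \emph{locally injective}, and any vertex of $x$ whose link does not realize all $2n$ directions (for instance a subdivision vertex of valence two sitting in the interior of a labeled edge) shows that $f$ need not be locally bijective. The actual argument is the standard Stallings completion: attach trees at deficient vertices to extend $f$ to a genuine covering $\hat f \from \hat x \to R_0$. Since $\hat f_\ast\pi_1(\hat x)=f_\ast\pi_1(x)=\F$, the covering is degree one, so $\hat x\cong R_0$; in particular $\hat x$ is finite with one vertex, which forces the attached trees to be trivial and $x$ to be all of $R_0$.

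In the inductive step, after ruling out a shared terminal vertex you conclude ``Hence $e_1\cup e_2$ is a tree in $x$.'' This implication fails when one of the two edgelets is a loop. For example, take $e_1$ a loop at $v$ labeled $s$ and $e_2$ an edgelet $v\to w$ labeled $s$ with $w\neq v$: they share no terminal vertex, and your loop $e_1\bar e_2$ is not even a closed path, yet $e_1\cup e_2$ contains the cycle $e_1$. The fold is nonetheless a homotopy equivalence here (it drops one vertex and one edge, so $\chi$ is preserved, and one checks directly that the induced map on $\pi_1$ is an isomorphism), but this case is genuinely outside the reach of the ``tree'' argument and must be handled separately. With these two repairs the induction goes through as you intend.
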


For the labeled rose $R$ in \thmref{Thm:SF}, there is a unique Stallings folding path
connecting $R$ to $R_0$ because at every step, there are only two edgelets with
the same label and the same original vertex.

\subsection{Folding for roses}
Here, describe a folding map and its association to partitions of $\partial \F$
which is similar to marking-directed fold but one does not need to match the labels of the edges identified. 
For simplicity, we restrict our attention to cases where the labeled graph is a rose
which is sufficient for all our examples. 

\begin{definition}
Consider a labeled rose $R$ and choose two edges of $R$ labeled $s$ and $t$. 
Let $R'$ be a labeled rose whose edge labels are the same as $R$ except the 
edge-label $t$ has changed to $\overline{s}t$. We say $R'$ is obtained from $R$ by a 
\emph{fold}, and write 
\[R' =\fold(R, t, s) \quad \text{or simply}\quad R' = \fold(R).\]
We also write $T' = \fold(T, t, s)$ for the equivariant map in the universal covers.
\end{definition}

\begin{figure}[h!]
\begin{subfigure}{.2\textwidth}
\begin{tikzpicture}
 \tikzstyle{vertex} =[circle,draw,fill=black,thick, inner sep=0pt,minimum size=.5 mm]
 
[thick, 
    scale=1,
    vertex/.style={circle,draw,fill=black,thick,
                   inner sep=0pt,minimum size= .5 mm},
                  
      trans/.style={thick,->, shorten >=6pt,shorten <=6pt,>=stealth},
   ]
   
    \node[vertex] (a) at (0, 0) {};
    \node[vertex] (b) at (1,0) {};
     \node[vertex] (c) at (2,0) {};
      \node[vertex] (d) at (0,1) {};
       \node[vertex] (e) at (1,1) {};
\draw [->](a)--(b);
\draw [->](b)--(c);
\path[->](a) edge [bend right] (d);
\draw[dash dot](e)--(b);
\node at (0.5, 0.2){$s$};
\node at (0.5, -0.2){$e_1$};
\node at (1.5, -0.2){$e_2$};
\node at (-0.2, 0.6){$t$};
\end{tikzpicture}
\caption*{$T$}
\end{subfigure}
\qquad
\begin{subfigure}{.2\textwidth}
\begin{tikzpicture}
 \tikzstyle{vertex} =[circle,draw,fill=black,thick, inner sep=0pt,minimum size=.5 mm]
 
[thick, 
    scale=1,
    vertex/.style={circle,draw,fill=black,thick,
                   inner sep=0pt,minimum size= .5 mm},
                  
      trans/.style={thick,->, shorten >=6pt,shorten <=6pt,>=stealth},
   ]
   
    \node[vertex] (a) at (0, 0) {};
    \node[vertex] (b) at (1,0) {};
     \node[vertex] (c) at (2,0) {};
      \node[vertex] (d) at (0,1) {};
       \node[vertex] (e) at (1,1) {};
\draw [->](a)--(b);
\draw [->](b)--(c);
\draw[dash dot](d)--(a);
\draw[->](b)--(e);
\node at (0.5, 0.2){$s$};
\node at (0.5, -0.2){$e_1$};
\node at (1.5, -0.2){$e_2$};
\node at (0.8, 0.6){$\overline{s}t$};
\end{tikzpicture}
\caption*{$T'$}
\end{subfigure}
\end{figure}

\begin{remark}
In this paper, a fold can occur between any two edges. In contrast, 
marking-directed folds \cite{stallingsfolding} follow the labeling of the graphs.  
\end{remark}
  
Given $\F$--trees $T$, $ T'$, and a fixed base-points $\BP \in T$ and $\BP' \in T'$, 
there is a natural morphism $f \from T \to T'$ constructed as follows. 
Send $\BP$ to $\BP'$ and, for $ w\in \F$, send 
the vertex $w_T$ in $T$ to the vertex $w_{T'}$ in $T'$. Also, send an edge 
$(w, ws)_T$ to the unique embedded edge path connecting $w_{T'}$ to $(ws)_{T'}$ 
in $T'$. The morphism $f$ also  induces an $\F$--equivariant homeomorphism 
\[f_{\infty} \from \partial T \to \partial T'.\]

Let $T$ and $T'$ be the universal covers of $R$ and $R'$ respectively. 
A fold from $R$ to $R'$ induces a morphism from $T$ to $T'$ where an edge
of the form $(w, wt)_T$ is mapped to the edge path 
\[
\big[ (w, ws)_{T'}, (ws, ws(\overline{s}t))_{T'}\big] 
\]
and every other edge is mapped to a single edge. Similarly, the edge $(w, ws)_{T'} \in T'$
has two pre-images, $(w, ws)_T$ and a half of $(w, wt)_T$. All other edges have exactly one 
pre-images. We now describe how partitions given by edges in $T$ differ from that 
of edges in $T'$. 

\begin{proposition}\label{boundaryequivalent}
Let $R' =\fold(R, t, s)$ and $f \from T \to T'$ be the above morphism. Then:
\begin{enumerate}[(i)]
  \item If an edge $e$ in $T$ is not an $s$--edge or a $t$--edge then
   $f(e) \in T'$ is boundary equivalent to $e$. 
 \item If $e=(w, wt)_T$ is a $t$--edge, then $e' = (ws, ws(\overline{s}t))_{T'}$, 
 which is contained in $f(e)$, is boundary equivalent to $e$.
 \item If $e_2$ is an $s$--edge and $e_1$ is the $t$--edge starting at the same
 vertex as $e_2$ then $e'=f(e_2)$ partitions $\partial \F$ in the following way
 \[
 \partial^-(e') =  \partial^-(e_2) \setminus \partial^+(e_1)
 \qquad\text{and}\qquad
 \partial^+(e') =  \partial^+(e_2) \cup \partial^+(e_1).
 \]
 \item For adjacent edges $e_1 = (u, us)_T$, $e_2 = (us,w)_T$ and 
 $e' \subset f(e_2)$, we have 
 \[
 \partial^-(e_1) \subset \partial^-(e'). 
 \]
  \end{enumerate}
 \end{proposition}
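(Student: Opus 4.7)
The plan is to use the $\F$-equivariance of $f$ to reduce each claim to edges incident to the base-point $\BP$, to identify $\partial T \cong \partial T' \cong \partial \F$ in the natural way (both $T$ and $T'$ are Cayley graphs of $\F$, so their Gromov boundaries agree canonically), and to describe the partitions $\partial^{\pm}(e)$ via first-letter conditions on the infinite reduced words representing boundary points. Under this identification, $f_\infty$ is the identity on $\partial \F$, and each partition $\partial^{\pm}(e)$ is $\BP$-independent.

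The main technical tool is a \emph{first-letter dictionary} between $T$-normal forms (in the generators $s_1,\dots,s_n$, which include $s$ and $t$) and $T'$-normal forms (with $t$ replaced by $\overline{s}t$). One obtains this dictionary by substituting $t \mapsto s \cdot \overline{s}t$ and $\overline{t} \mapsto (\overline{s}t)^{-1}\overline{s}$ and then performing the free reductions at block boundaries. A short case analysis shows: the first $T'$-letter of an element $v$ is $s$ iff the first $T$-letter of $v$ is $s$ or $t$; it is $\overline{s}t$ iff the $T$-normal form begins with the two letters $\overline{s},t$; it is $(\overline{s}t)^{-1}$ iff the first $T$-letter is $\overline{t}$; all other first letters are preserved by the substitution.

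With this dictionary, each part is a direct verification. For part (i), reduce to $e=(\BP,x)_T$ with $x\notin\{s^{\pm 1},t^{\pm 1}\}$; then both $\partial^+(e)$ and $\partial^+(f(e))$ are described by ``first letter equal to $x$'', and the dictionary gives equality. For part (ii), the condition that the first $T$-letter of $w^{-1}\xi$ is $t$ matches exactly the condition that the first two $T'$-letters of $w^{-1}\xi$ are $s,\overline{s}t$; the first is $\partial^+(e)$ and the second is $\partial^+\bigl((ws,ws\overline{s}t)_{T'}\bigr)$. For part (iii), $\partial^+(e')$ consists of rays whose first $T'$-letter after $w$ is $s$, which by the dictionary is the union of ``first $T$-letter is $s$'' (that is, $\partial^+(e_2)$) and ``first $T$-letter is $t$'' (that is, $\partial^+(e_1)$); the formula for $\partial^-(e')$ follows by complementation. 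For part (iv), one translates so that $v=\BP$, writes $e_1=(p,\BP)_T$ and $e_2=(\BP,q)_T$ for single letters $p,q$, and uses the dictionary to check that the first-letter constraint defining $\partial^-(e_1)$ forces the $T'$-geodesic of $\xi$ to stay on the $\BP$-side of $e'$, so that $\xi\in\partial^-(e')$.

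The main obstacle is establishing the first-letter dictionary: one must track the block-boundary reductions $s\cdot\overline{s}\to\epsilon$ and $\overline{s}\cdot s\to\epsilon$ and enumerate the two-letter $T$-prefixes (such as $\overline{s}t$ and $\overline{t}s$) that produce nontrivial cancellations under the substitution. This is a finite combinatorial exercise but requires care to avoid missing cases, particularly for part (iv), where the interaction between the label of $e_1$ and the label of $e_2$ determines which subcase of the dictionary is invoked.
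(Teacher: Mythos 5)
Your normal-form/first-letter dictionary gives a genuinely different route from the paper's. The paper works geometrically with the morphism $f\colon T\to T'$: it isolates which edges of $T'$ have a unique $f$-preimage (those are boundary equivalent to their preimages), identifies the edge $(u,us)_{T'}$ as the one with two preimages $(u,us)_T$ and $(u,ut)_T$, and reads off the partition formula in (iii) from the fact that a ray from $u$ crosses at most one of those two edges. Your approach replaces this with an explicit bookkeeping of how the substitution $t\mapsto s\cdot(\overline{s}t)$, $\overline t\mapsto(\overline{s}t)^{-1}\overline s$ changes the first letter of a reduced word, after which parts (i)--(iii) are a finite verification. Both arguments are elementary; yours is more computational and arguably easier to audit once the dictionary is established (and your dictionary is correct, including the conditional behaviour of $\overline s$).

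However, there is a genuine issue in part (iv) that you flag as ``requires care'' but do not resolve, and that your claimed verification would actually fail on. In your normalization $e_1=(p,\BP)_T$, $e_2=(\BP,q)_T$, take $p=s$, $q=t$ (so the turn at $\BP$ is the \emph{illegal} turn $\overline{s}\cdot t$ of the fold). Then $\partial^-(e_1)$ consists of rays whose first $T$-letter is $s$, and by your dictionary these all have first $T'$-letter $s$; but for $e'=(\BP',s)_{T'}\subset f(e_2)$ the set $\partial^-(e')$ is precisely the rays whose first $T'$-letter is \emph{not} $s$, so $\partial^-(e_1)\cap\partial^-(e')=\emptyset$ rather than $\partial^-(e_1)\subset\partial^-(e')$. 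The same failure occurs for $(p,q)=(t,s)$. Geometrically, in exactly these two cases the concatenation $[f(e_1),f(e_2)]$ backtracks at $\BP'$, and the ``early'' edge of $f(e_2)$ is the one cancelled against $f(e_1)$. The statement of (iv), read literally, is therefore false for the two illegal-turn adjacencies; it is true whenever the turn $[e_1,e_2]$ survives the fold, which is the only way (iv) is invoked in Lemma~\ref{threecases}. You should either restrict (iv) accordingly or restrict $e'$ to the edges of $f(e_2)$ that persist in the reduced image of $[e_1,e_2]$. Be aware that the paper's own proof of (iv) has the same gap: it asserts that the non-$s$-edge cases ``follow from (i) or (ii),'' but when $e_1$ carries label $\overline s$ (in the direction $u\to v$) and $e_2$ carries label $t$, neither (i) nor (ii) gives the inclusion.
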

\begin{proof}
Let $\BP$ and $\BP'$ be the base points in $T$ and $T'$; 
$F(\BP) = \BP'$. Note that changing a base-point $\BP$ to a point $w$ 
in $T$ moves both the partition given by $e$ in $T$ and given by $e'$ in $T'$ by 
the action of $w$. Hence, it is sufficient to prove the statement
for any desired base-point. 

Consider an edge $e' \in T'$ that has only one pre-image under 
the morphism $f \from T \to T'$. That is, $e$ is the only edge where
$e' \subseteq f(e)$. Then, $e$ and $e'$ are boundary equivalent. 
To see this, assume $\BP = w_T$.  A ray $r$ starting from $\BP$ crosses $e$ if an only 
if $f(r)$ crosses $e'$. Hence, a ray is eventually in front of $e$ if and only if it is 
eventually in front of $e'$ which, by definition, means $e$ and $e'$ are boundary 
equivalent. This finished the proof of the first two parts. 

Let $e_1=(u, ut)_T$ and $e_2=(u, us)_T$. We choose $\BP = u$.
Then $e_1$ and $e_2$ are the only two edges that are mapped over $f(e_1)$. 
But a ray $r$ in $T$ starting from $\BP$ crosses at most one of $e_1$ or $e_2$. 
In fact, $f(r)$ crosses either edge if and only if it is eventually in front of $f(e_1)$. 
That is 
\[
\partial^+{f(e)} =    \partial^+(e) \cup \partial^+(\hat e).
\]
The other equality in part (iii) holds because $\partial^-{f(e_1)}$ is the complement 
of $\partial^+{f(e_1)}$. 

In part (iv), we have $e_1=(u, us)_T$. Since $R' =\fold(R, t, s)$, the only ray $r$ in 
$\partial^-(e_1)$ where $f(r)$ crosses $f(e_1)$ is a ray starting with $(u, ut)_T$. 
If such is the case, then $f(r)$
starts with $\big[ (u, us)_{T'}, (us, us(\overline{s} t))_{T'}\big]$. But $e'$ is different from 
$(us, us(\overline{s} t))_{T'}$ (which has only one pre-image). Hence, $f(r)$ does not
cross $e'$ and lands in $\partial^+(e')$. Therefore,
 \begin{equation*}
 \partial^-(e_1) \subset \partial^-(e').  \qedhere
 \end{equation*}
\end{proof}

\section{Twisting estimate}
Let $|G|$ denote the number of edges in the given graph $G$. We now define 
\emph{relative twisting number}, which is an analogue of the Masur-Minsky twisting 
number \cite{hierarchy}. Recall that, for a given pair of trees $T$ and $T_0$,  
a slice $C_{e_0}(T)$ over an edge $e_0 \in T_0$ 
is the subtree of $T$ consisting of edges that form intersection squares with $e_0$. 

\begin{definition} \label{Def:Twist} 
Given a loop $\alpha$ and two $\F$--trees $T, T_0$, the 
\emph{relative twisting number} of $\F$--trees $T, T_0$ around $\alpha$ is
\[
\twist_{\alpha}(T, T_0) =
   \max_{e_0 \in T_0, \, w \in \alpha} \frac{|\axis_T(w) \cap C_{e_0}(T)|}{\ell_T (\alpha)}.
\]
If $T$ and $T_0$ are universal covers of $R$ and $R_0$, we define 
$\twist_{\alpha}(R, R_0) =\twist_{\alpha}(T, T_0)$. 
\end{definition}

\begin{example}\label{twistingexample}
We illustrate the intuition behind this definition by an example. 
Consider the rose $R$ with labels $\langle a, b, c\, (ab)^{4} \rangle$
and let $T$ be the universal cover of $R$ with basepoint $\BP$ and let $T_0$ be 
the universal cover of $R_0$ with labels $\langle a, b, c \rangle$ and base-point
$\BP_0$ as usual. Let $\alpha = [ab]$ and $w=ab$. Then, $\ell_T (\alpha) = 2$. 
Consider the edges $e_{a} = ({\rm id}, a)_{T_0}$. Then $C_{e_{a}}$ is the tree 
depicted below. The solid edges form a slice but the dashed edges are 
not in the core. One can see this by direct computation or from 
\cite[Lemma 3.7]{uniformsurgery} which states that $C_{e_a}$ is the interior 
of the convex hull of the pre-image of $e_a$. Intuitively, 
this is caused by the fact that 
the edge $( \overline c, (ab)^4)_{T}$ in $T$ (which is the doted edge on the right)
is in front of all the solid edges in $T$ but maps to an edge path that 
crosses $e_a$ which is behind the image of the solid edges in $T_0$. 
The edge $e=({\rm id}, a)_T$ (the dotted edge in 
the left) does not form an intersection square with $e_a$ because 
$\partial ^- e \cap \partial^+e_a = \emptyset$.

\begin{figure}[h!]
\begin{tikzpicture}[scale=0.8]
 \tikzstyle{vertex} =[circle,draw,fill=black,thick, inner sep=0pt,minimum size=.5 mm]
 
[thick, 
    scale=1,
    vertex/.style={circle,draw,fill=black,thick,
                   inner sep=0pt,minimum size= .5 mm},
                  
      trans/.style={thick,->, shorten >=6pt,shorten <=6pt,>=stealth},
   ]
   
    \tikzstyle{vertexr} =[circle,draw,fill=red,thick, inner sep=0pt,minimum size=.8 mm]
 
[thick, 
    scale=1,
    vertex/.style={circle,draw,fill=red,thick,
                   inner sep=0pt,minimum size= .8 mm},
                  
      trans/.style={thick,->, shorten >=6pt,shorten <=6pt,>=stealth},
   ]

    \node[vertexr] (v_{1}) at (0, 0) {};
    \node[vertex] (v_{2}) at (1, 0) {};
    \node[vertex] (v_{3}) at (2, 0) {};
    \node[vertex] (v_{4}) at (3, 0) {};
    \node[vertex] (v_{5}) at (4, 0) {};
    \node[vertex] (v_{6}) at (5, 0) {};
    \node[vertex] (v_{7}) at (6, 0) {};
    \node[vertex] (v_{8}) at (7, 0) {};
    \node[vertex] (v_{9}) at (8, 0) {};
    \node[vertex] (v_{10}) at (7, -1) {};
\draw [dash dot](v_{1})--(v_{2}){};
\draw [->](v_{2})--(v_{3}){};
\draw [->](v_{3})--(v_{4}){};
\draw [->](v_{4})--(v_{5}){};
\draw [->](v_{5})--(v_{6}){};
\draw [->](v_{6})--(v_{7}){};
\draw [->](v_{7})--(v_{8}){};
\draw [->](v_{8})--(v_{9}){};
\draw[->, dash dot](v_{10})-- (v_{9}){};

\node at (0.5, 0.2){$a$};
\node at (1.5, 0.25){$b$};
\node at (2.5, 0.2){$a$};
\node at (3.5, 0.25){$b$};
\node at (4.5, 0.2){$a$};
\node at (5.5, 0.25){$b$};
\node at (6.5, 0.2){$a$};
\node at (7.5, 0.25){$b$};
\node at (8.5, -0.6){$c(ab)^{4}$};
\node at (-.4, 0){$\BP$};
\node at (6.8, -1.3){$\overline c \, \BP$};

\draw[dash dot] (v_{1})--(v_{2}){};
\end{tikzpicture}
\caption{The slice $C_{e_{a}}$ over $e_{a} \in T_{0}$ is a path of length 7.}
\end{figure}
The whole slice lies on $\axis_T(ab)$, therefore
\[
 \frac{|\axis_T(ab) \cap C_{e_{a}}(T)|}{\ell_T (\alpha)} = \frac 72 =3.5.
\] 
Meanwhile, if we consider $e_{b}=({\rm id}, b)_T$, the slice over $e_{b}$ is the 
set of solid edges in the picture below. 
\begin{figure}[h!]
\begin{tikzpicture}[scale=0.8]
 \tikzstyle{vertex} =[circle,draw,fill=black,thick, inner sep=0pt,minimum size=.5 mm]
 
[thick, 
    scale=1,
    vertex/.style={circle,draw,fill=black,thick,
                   inner sep=0pt,minimum size= .5 mm},
                  
      trans/.style={thick,->, shorten >=6pt,shorten <=6pt,>=stealth},
   ]
   
    \tikzstyle{vertexr} =[circle,draw,fill=red,thick, inner sep=0pt,minimum size=.8 mm]
 
[thick, 
    scale=1,
    vertex/.style={circle,draw,fill=red,thick,
                   inner sep=0pt,minimum size= .8 mm},
                  
      trans/.style={thick,->, shorten >=6pt,shorten <=6pt,>=stealth},
   ]

\node[vertexr] (v_{2}) at (1, 0) {};
\node[vertex] (v_{3}) at (2, 0) {};
\node[vertex] (v_{4}) at (3, 0) {};
\node[vertex] (v_{5}) at (4, 0) {};
\node[vertex] (v_{6}) at (5, 0) {};
\node[vertex] (v_{7}) at (6, 0) {};
\node[vertex] (v_{8}) at (7, 0) {};
\node[vertex] (v_{9}) at (8, 0) {};
\node[vertex] (v_{10}) at (7, -1) {};
\draw [dash dot](v_{2})--(v_{3}){};
\draw [->](v_{3})--(v_{4}){};
\draw [->](v_{4})--(v_{5}){};
\draw [->](v_{5})--(v_{6}){};
\draw [->](v_{6})--(v_{7}){};
\draw [->](v_{7})--(v_{8}){};
\draw [->](v_{8})--(v_{9}){};
\draw[->, dash dot](v_{10})-- (v_{9}){};

\node at (1.5, 0.25){$b$};
\node at (2.5, 0.2){$a$};
\node at (3.5, 0.25){$b$};
\node at (4.5, 0.2){$a$};
\node at (5.5, 0.25){$b$};
\node at (6.5, 0.2){$a$};
\node at (7.5, 0.25){$b$};
\node at (8.5, -0.6){$c(ab)^4$};
\node at (.6, 0){$\BP$};
\node at (6.8, -1.3){$\overline a \,  \overline c \, \BP$};

\draw[dash dot, thick] (v_{2})--(v_{3}){};
\end{tikzpicture}
\caption{The slice $C_{e_{b}}$ over $e_{b} \in T_{0}$ is a path of length 6.}
\end{figure}

Since this slice also lies on $\axis_T(ab)$, we have 
\[
 \frac{|\axis_T(ab) \cap C_{e_{b}}(T)|}{\ell_T (\alpha)} = \frac 62 =3.
\] 
Which means, if one starts from $b$, one observes less twisting. 
Lastly, the edge $e_c=({\rm id}, c)_T$ has only one pre-image in $T$. 
This implies that $C_{e_{c}} = \emptyset$ (again, see \cite[Lemma 3.7]{uniformsurgery}).
Thus the estimate for the twist depends heavily on the choice of $e_0$. In our example, 
\[
\twist_{\alpha}(T, T_0) = \max \Big\{ 3.5, 3, 0 \Big\} = 3.5
\qquad\text{and}\qquad
\tw_\alpha(T, T_0) = [3.5]= 3. 
\]
\end{example}

\begin{remark}\label{ClayandPettet}
The twisting number defined above is a rational number. The integer part of 
$\twist_{\alpha}(T, T_0)$, which we denote by $\tw_{\alpha}(T, T_0)$, 
is equal to the Clay-Pettet definition of relative twisting number which
considers hyperplanes $\tau, \tau_{0}$ in the core that are dual to specific edges
and counts how many $\alpha$--translates of $\tau$ intersects $\tau_{0}$
(see \cite{twistdefinition} for details). Similar to the above example, one can see that 
in the example of Theorem A,
\[
\tw_{b} (R_{0}, \phi(R_{0}))= s-1 \qquad \text {and}\qquad 
\tw_{ab^{s}}(R_{0}, \phi(R_{0})) = t-1.
\]
\end{remark}

We now show that the relative twisting number changes slowly along loops with large 
lengths. For a real number $r>0$, let $[r]$ be the integer part of $r$ and 
$\{ r\}$ be the fractional part of $r$. 

\begin{theorem}\label{slowchange}
Let $R'$ be obtained from $R$ by a single fold:
\[
R' = \fold(R, t, s)
\]
Then, for any loop  $\alpha$,
\begin{equation}\label{Eq:all cases}
\tw_\alpha(R', R_0) \geq \tw_\alpha(R, R_0)-1
\end{equation}
where $R_0$ is as defined before. Furthermore, 

\begin{equation}\label{Eq:fractional}
\twist_\alpha(R', R_0) \geq 
\left[ \twist_\alpha(R, R_0)- \frac {2}{\ell_{R}(\alpha)} \right]
+ \frac{\left\{ \twist_\alpha(R, R_0 )- \frac  {2}{\ell_{R}(\alpha)} \right\}}4.
\end{equation}
\end{theorem}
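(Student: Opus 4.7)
The plan is to fix $e_0 \in T_0$ and $w \in \alpha$ realizing the maximum in the definition of $\twist_\alpha(R, R_0)$, and to compare the arc $A = \axis_T(w) \cap C_{e_0}(T)$ with the corresponding arc $A' = \axis_{T'}(w) \cap C_{e_0}(T')$ in $T'$. Writing $N = |A|$, $L = \ell_T(w) = \ell_R(\alpha)$, $N' = |A'|$, and $L' = \ell_{T'}(w)$, we have $\twist_\alpha(R, R_0) = N/L$ and $\twist_\alpha(R', R_0) \geq N'/L'$, so the task reduces to bounding $N'/L'$ in terms of $N/L$ and $L$.

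The strategy is to transport the edges of $A$ through the fold morphism $f \from T \to T'$ using \propref{boundaryequivalent}. Classify the edges of $A$ into three types: non-$s$-non-$t$ edges (each with a single boundary-equivalent $f$-image in $T'$, automatically in $C_{e_0}(T')$); $t$-edges (whose $f$-image is a length-two path ending with a $t'$-edge that is boundary equivalent to the original by part (ii)); and $s$-edges (whose $f$-image in $T'$ has a modified partition by part (iii)). Reductions in the $f$-image of $\axis_T(w)$ occur precisely at subwords $\bar s t$ or $\bar t s$ of the axis word, each reduction simultaneously removing one $s$-edge and one $t$-edge from the global axis in $T'$ and from the image of $A$; such reductions therefore affect numerator and denominator symmetrically and preserve the desired ratio estimates.

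The main technical obstacle is to control the $s$-edges of $A$ whose modified $T'$-partitions fail the intersection-square condition with $e_0$. The key claim is $N' \geq N - 2$, with the two lost edges localized at the endpoints of $A$. The argument exploits convexity: $C_{e_0}(T')$ is a convex subtree of $T'$ and $A'$ is a connected subarc of $\axis_{T'}(w)$, so if an interior $s$-edge $e_2$ of $A$ had its $f$-image drop out of $C_{e_0}(T')$, the connected arc $A'$ would have to avoid a central portion of the image path. A careful use of \propref{boundaryequivalent}(iii)--(iv) together with the fact that the adjacent edges of $e_2$ in $A$ and the preceding $t$-edge $e_1$ at the start vertex of $e_2$ all satisfy the intersection-square condition at $e_0$ is needed to rule out this pathology, forcing the failures to occur only at the two endpoints. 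I expect making this endpoint-localization precise to be the hardest step.

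Given $N' \geq N - 2$, the first inequality \eqref{Eq:all cases} follows by passing to integer parts of $N'/L' \geq (N - 2)/L'$ after observing that the cancellation accounting ensures the ratio estimate survives even when $L' \neq L$. For the sharper inequality \eqref{Eq:fractional}, I would separate $q = \twist_\alpha(R, R_0) - 2/L$ into its integer and fractional parts: when $\{q\}$ is nontrivial, the axis $\axis_T(w)$ completes $[q]$ full wraps through $C_{e_0}(T)$ together with a partial wrap of normalized length $\{q\}$, and this partial wrap contributes at least $\{q\}/4$ to $\twist_\alpha(R', R_0)$ after folding, with the factor $1/4$ arising from the worst-case redistribution among the four intersections defining an intersection square in \defref{Definition:Partition}. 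The sharpness of these constants will be verified by \exaref{Exa:Log}.
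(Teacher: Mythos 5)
Your high-level framework matches the paper's: fix $e_0\in T_0$ and $w\in\alpha$ realizing the maximum, track the arc $E=\axis_T(w)\cap C_{e_0}(T)$ through the fold morphism $f\from T\to T'$, use Proposition~\ref{boundaryequivalent} to classify edges, invoke convexity of slices, and split the twisting into an integer part that survives intact plus a fractional remainder. However, two of your central quantitative claims are incorrect, and correcting them is exactly where the real content of the theorem lies.

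First, the claim $N'\geq N-2$ (with $N=|A|$, $N'=|A'|$ counting slice edges on the two axes) is false. The fold can cancel: an $s$--edge immediately followed by a $t$--edge in $\axis_T(w)$ collapses, in $T'$, to a single $\overline{s}t$--edge. If a long stretch of $E$ consists of alternating $s$-- and $t$--edges with this pattern, then $|f(E)_w|$ (hence $N'$) can be as small as roughly $N/2$, and no amount of endpoint bookkeeping fixes this. What the paper actually proves (Lemma~\ref{threecases}) is only that among the first two edges $e_1,e_2$ of $E$ there is one whose image survives in $C_{e_0}(T')\cap\axis_{T'}(w)$, and likewise for $e_{k-1},e_k$; the paper never compares $N'$ to $N$ directly. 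The inequality you want must be phrased in terms of the image length $|f(E)_w|$, which is then bounded below via the factor-of-two Lemma~\ref{factorof2}. Your proposed remedy --- that the same cancellations hit $L'$, making numerator and denominator shrink ``symmetrically'' and preserving the ratio --- does not work: the frequency of $st$-- or $\overline{t}\,\overline{s}$--cancellations along the slice arc $E$ need not match the frequency along a full fundamental domain of $w$, so the two ratios are not coupled.

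Second, the factor $\frac14$ does not come from the four nonempty intersections in Definition~\ref{Definition:Partition}; that numerical coincidence is a red herring. It comes from applying the doubling bound of Lemma~\ref{factorof2} twice: once as $\ell_{T'}(\alpha)\leq 2\,\ell_T(\alpha)$ (the fold can at most double each fundamental domain) and once as $\big|f(E_{\rm remain})_w\big|\geq\tfrac12\,|E_{\rm remain}|$ (applying the same lemma to the reverse fold $g\from T'\to T$). Together with the observation that the integer part $p=\big[\twist_\alpha(T,T_0)-\tfrac2L\big]$ contributes exactly $p\,\ell_{T'}(\alpha)$ to $|f(E)_w|$ because $\alpha^p$ translates slice edges to slice edges, one gets
\[
\twist_\alpha(T',T_0)\;\geq\;p+\frac{|f(E_{\rm remain})_w|}{\ell_{T'}(\alpha)}\;\geq\;p+\frac{|E_{\rm remain}|/2}{2\,\ell_T(\alpha)},
\]
which yields \eqref{Eq:fractional}. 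So the correct mechanism is: the integer number of full wraps is preserved exactly because fundamental domains map to fundamental domains, the fractional remainder loses a factor of $4$ from two independent length doublings, and the endpoint loss costs $2/L$. Your intuition about endpoint localization and integer-part survival is right; the $N'\geq N-2$ shortcut and the intersection-square explanation of $1/4$ are the two places where the argument breaks.
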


We need to prepare for the proof by establishing a few lemmas. 
Note that $R$ can also be obtained from $R'$
by a single fold: 
\[
R' = \fold(R, t, s), \qquad R= \fold(R', \os t, \os)
\]
For the rest of this section, we assume $T$ and $T'$ are universal covers of $R$ and $R'$
respectively and that
\[
f \from T \to T' 
\qquad \text{and} \qquad 
g \from T' \to T
\]
are the morphism associated to these folds.  For an embedded edge path
$P = [e_1 \dots e_k]$ in $T$, let $f(P)$ denote the image of $P$ under the morphism 
$f$ and let $f(P)_w$ denote the embedded edge path that is the intersection of $f(P)$ 
and $\axis_{T'}(w)$. We call $e_1$ and $e_k$ the \emph{end edges} of $P$.

\begin{lemma}\label{factorof2}
For any loop $\alpha$, we have
\[
\ell_T(\alpha) \geq \frac{\ell_{T'}(\alpha)}{2}. 
\]
Also, if $P= [e_1 \dots e_k]$ is an edge path on the $\axis_T(w)$, for some $w \in \F$, 
so that both $f(e_1)$ and $f(e_k)$ contain an edge on $\axis_{T'}(w)$, then 
\[
\big| f(P)_w \big| \geq \frac{|P|}2.
\]
\end{lemma}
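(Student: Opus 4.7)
The plan is as follows. The first inequality follows quickly from the observation that the morphism $f\colon T \to T'$ is $2$-Lipschitz at the edge level: every non-$t^{\pm}$-edge maps to a single edge, and each $t^{\pm}$-edge maps to a path of length two. Applied to a fundamental domain $E_0 \subseteq \axis_T(w)$ (so $|E_0| = \ell_T(\alpha)$), this gives an edge path $f(E_0)$ in $T'$ from $f(\BP)$ to $wf(\BP)$ of length at most $2\,\ell_T(\alpha)$, and hence $\ell_{T'}(\alpha) \leq d_{T'}(f(\BP), wf(\BP)) \leq 2\,\ell_T(\alpha)$.

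For the second inequality, the first step is to classify the backtracks in the concatenation $f(e_1)\,f(e_2)\cdots f(e_k)$. A direct case check using the formulas for $f$ on each edge type (or \propref{boundaryequivalent}) shows that a backtrack at the junction of $e_i$ and $e_{i+1}$ occurs precisely in two local configurations: \emph{Type~I}, where $e_i$ is an $s^{-1}$-edge and $e_{i+1}$ a $t$-edge, and \emph{Type~II}, where $e_i$ is a $t^{-1}$-edge and $e_{i+1}$ an $s$-edge. In each case two edges of $E$ collapse to a single $\overline{s}t^{\pm 1}$-edge in $T'$; the cancelled $s^{\pm 1}$-edge lies off $\axis_{T'}(w)$ while the surviving $\overline{s}t^{\pm 1}$-edge lies on it. These cancellation pairs are pairwise disjoint in $E$, so their number $c$ satisfies $2c \leq |E|$.

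Next, let $A_i$ denote the number of edges of $f(e_i)$ on $\axis_{T'}(w)$. Because on-axis edges are never cancelled, no on-axis edge is shared between distinct $f(e_i)$'s, which yields $|f(E)_w| = \sum_i A_i$. I will then establish two bounds: for each cancellation pair, $A_i + A_{i+1} = 1$; for every $e_i$ not lying in a cancellation pair, $A_i \geq 1$. The latter follows from the hypothesis when $i \in \{1, k\}$, and, for interior indices, from the fact that $f(e_i)$ lies in the middle, on-axis portion of the tight geodesic $g(E)$ from $f(u)$ to $f(v)$; the two off-axis tails of $g(E)$ each have length at most $1$ and are absorbed inside $f(e_1)$ and $f(e_k)$. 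The hypothesis further rules out a Type~I pair at $(e_1, e_2)$ (which would force $A_1 = 0$) and symmetrically a Type~II pair at $(e_{k-1}, e_k)$. Summing gives
\[
|f(E)_w| \;\geq\; (|E|-2c)\cdot 1 + c\cdot 1 \;=\; |E| - c \;\geq\; |E|/2.
\]

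The main obstacle I anticipate is the off-axis claim for cancelled edges, or equivalently the claim that $\axis_{T'}(w)$ passes directly along each surviving $\overline{s}t^{\pm 1}$-edge and skips the corresponding detour vertex. This rests on the general fact $\axis_{T'}(w) \subseteq f(\axis_T(w))$ (the unique $w$-invariant line in $T'$ must appear inside the $w$-invariant subtree $f(\axis_T(w))$) together with the observation that in the new basis the words $s^{-1}t$ and $t^{-1}s$ become single letters, forcing the tightened, $w$-invariant line to take the shortcut rather than the detour at each cancellation site.
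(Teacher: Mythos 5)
Your proof of the first inequality matches the paper's. For the second inequality you take a genuinely different route. The paper applies the reverse morphism $g \colon T' \to T$, shows that the subtree $g(f(E)_w)$ contains $E$ by checking this only at the two end-vertices of $E$, and then finishes with $|E| \le |g(f(E)_w)| \le 2|f(E)_w|$. You instead count on-axis edges directly, by classifying the backtracks that occur in the concatenation $f(e_1)\cdots f(e_k)$.

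Your classification of the backtrack sites into the two types is correct, the observation that cancellation pairs are pairwise disjoint is correct, and the final arithmetic $|E|-c\ge|E|/2$ is sound. The gap is in the two structural claims the count rests on: that at each cancellation site the cancelled $s^{\pm 1}$-edge lies off $\axis_{T'}(w)$ while the surviving $\overline{s}t^{\pm 1}$-edge lies on it, and that $A_i \ge 1$ whenever $e_i$ is not in a cancellation pair. What you actually need is that $\axis_{T'}(w)$ \emph{equals} the tightening of $f(\axis_T(w))$, so that $f(\axis_T(w))$ is the axis together with length-one hairs at the cancellation vertices. The one inclusion $\axis_{T'}(w) \subseteq f(\axis_T(w))$ that you cite does not give this; you would also need to show (i) that cancellations do not cascade, so each hair really has length one, and (ii) that the set of surviving edges forms a bi-infinite, $w$-invariant, immersed line inside $f(\axis_T(w))$ and hence coincides with the axis. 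Your closing paragraph gestures at the ``shortcut versus detour'' idea but does not supply (i) or (ii). Two smaller points: you should also rule out a Type~II pair straddling $(e_0,e_1)$ and a Type~I pair straddling $(e_k,e_{k+1})$, since a cancellation involving an edge just outside $E$ would likewise force $A_1=0$ or $A_k=0$; and the phrase ``$f(e_i)$ lies in the middle, on-axis portion of the tight geodesic'' is not quite the right formulation, since part of $f(e_i)$ can be a hair even for interior $i$ --- the relevant statement is that the \emph{surviving} edges of $f(e_i)$ are on the axis. In exchange for this extra bookkeeping your approach makes the location of the lost edges explicit, while the paper's argument with $g$ is shorter and avoids having to describe the structure of $f(\axis_T(w))$ at all.
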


\begin{proof}
Recall that $f$ maps an edge $(u, u t)_T$ to the edge path
\[ 
\big[ (u, us)_{T'}, (us, us(\overline{s} t))_{T'} \big]
\]
and maps every other edge to one edge. Therefore, for any embedded
edge path $P$ in $T$, 
\begin{equation}\label{twice}
 \big| f(P) \big| \leq 2 |P|.
 \end{equation}
If $P$ is an edge path that realizes $\ell_{T}(\alpha)$, then 
$\big| f(P) \big| \geq \ell_{T'}(\alpha)$, thus
\[
\ell_T(\alpha)  =  |P| \geq \frac {\big| f(P)\big|}{2} \geq \frac{\ell_{T'}(\alpha)}{2}. 
\]

Now assume $P= [e_1 \dots e_k]$ is an edge path on the $\axis_T(w)$ and
both $f(e_1)$ and $f(e_k)$ contain an edge on $\axis_{T'}(w)$. 
Applying, Equation~\ref{twice} to the morphism $g$ and the edge path 
$f(P)_w$, we have 
\begin{equation}\label{wednesday}
 \big| g(f(P)_w)\big| \leq 2 \big| f(P)_w \big|. 
 \end{equation}
We need to show $P \subseteq g(f(P)_w)$. In fact, it suffices to show
that any end vertex of $P$ is contained in $g(f(P)_w)$. 

Let $u_T$ be the first vertex in $e_1$ (the argument for the last vertex of $e_k$ 
is similar). If $u_{T'} \in \axis_{T'}(w)$, then $u_{T'} \in f(P)_w$, which means 
$u_T \in g(f(P)_w)$. Otherwise,   $u_{T'} \notin \axis_{T'}(w)$. By assumption, $f(e_1)$ 
contains an edge on $\axis_{T'}(w)$. It follows that $e_1$ is mapped to two edges 
$[e', e'']$ under $f$, which means $e_1$ is necessarily a $t$--edge (or a $t^{-1}$--edge, in which the remainder of the proof changes accordingly). 
Furthermore, the edge $e'$ has two pre-images. This is because, 
$u_{T'} \notin \axis_{T'}(w)$ and the edge preceding $e_1$ along the 
$\axis_T(w)$ must be mapped over $e'$ as well. But the edges in $T'$ with label 
$\os t$ have only one pre-image. Hence, 
\[
e_1 = (u ,ut)_T, \qquad e'=(u, us)_{T'}, \qquad\text{and} \qquad e''=(us, ut)_{T'}.
\] 
We then have
\[
(us)_{T'}, (ut)_{T'} \in \axis_{T'}(w) \qquad\Longrightarrow\qquad 
(us)_T, (ut)_T \in g\big( f(P)_w \big). 
\]
But $T$ is a tree and the vertex $u_T$ necessarily lies on the path connecting 
$(us)_T$ and $(ut)_T$. Thus $(us)_T, (ut)_T \in g(f(P)_w)$ implies $u_T \in g(f(P)_w)$. 

We have shown that the end vertices of $P$ are both in $g(f(P)_w)$, 
which implies $P \subseteq g(f(P)_w)$. The lemma follows from 
\eqnref{wednesday}. 
\end{proof}

Let $\alpha \in \F$ be a loop. Let $e_0 \in T_0$ and $w \in \alpha$ be so that 
the edge path 
$$
P := [e_1e_2\dots e_k] = \axis_T(w) \cap C_{e_0}(T) 
$$
is the one realizing the maximum in the definition of $\twist_\alpha(T, T_0)$, although we remark that maximality is not needed for the following
result.
\begin{lemma}\label{threecases}
Let $P$ be the path above and assume $|P| \geq 2$. Then $f([e_1e_2])$ contains 
an edge in $C_{e_0}(T')\cap \axis_{T'}(w)$. The same holds for $f([e_{k-1}e_k])$. 
Furthermore, if $\ell_T(\alpha)=1$, then either $f(e_1)$ or $f(e_k)$
contains an edge in $C_{e_0}(T')\cap \axis_{T'}(w)$. 
\end{lemma}

\begin{proof}
The edge path $f(P)$ contains $f(P)_w$. Furthermore, an edge in $f(P)$ is on $f(P)_w$
if and only if it has a unique pre-image in $\axis_T(w)$. 

If, for $i=1$ or $2$, $e_i=(u, v)$ and $\overline{u}v$ is not $s$, $\overline{s}$, $t$ or 
$\overline{t}$, then $f(e_i)$ is a single edge, has one pre-image 
and it is boundary equivalent to $e_i$. Hence, it lies on $C_{e_0}(T')\cap  \axis_{T'}(w)$.

If $e_i=(u, ut)$, by the definition of $\fold(T, t, s)$,
$e_i$ is mapped to the edge path 
\[ 
[(u, us)(us, us(\overline{s} t))].
\] 
Since the edge $(us, us(\overline{s} t))$ has only one pre-image, it lies
on $f(P)_w$ and by Proposition~\ref{boundaryequivalent}, it is 
boundary equivalent to $e_i$ and thus $(us, us(\overline{s} t))\in C_{e_0}(T')$.
Similar argument works when $e_i= (ut, u)$. Also, this shows that if
$\alpha = t$ or $\overline t$, then either $f(e_1)$ or $f(e_k)$
contains an edge in $C_{e_0}(T')\cap \axis_{T'}(w)$.

There are two remaining cases. Assume $e_1=(u, us)$ and $e_2 = (us, us^2)$. 
Then the $t$--edge starting at $us$, $(us, ust)$, does not lie on $\axis_T(w)$. 
Hence, $f(e_2)$ has one pre-image in $\axis_T(w)$ and thus is on $\axis_{T'}(w)$. 
Also, by part (iii) of \propref{boundaryequivalent}, 
\[
\partial^+(e_2) \subset \partial^+(f(e_2)) 
\]
and by part (iv) of \propref{boundaryequivalent}, 
\[
\partial^-(e_1) \subset \partial^-(f(e_2)).
\]
Since $e_1, e_2 \in C_{e_{0}}(T)$, each of $\partial^+(e_2)$ and $\partial^-(e_1)$ intersects 
each of $\partial^+(e_0)$ and $\partial^-(e_0)$. Therefore, each of $\partial^+(f(e_2))$ 
and $\partial^-(f(e_2))$ intersects each of  $\partial^+(e_0)$ and $\partial^-(e_0)$. That is, 
$f(e_2) \in C_{e_{0}}(T')$. 

The remaining case when $e_1=(us^2, us)$ and $e_2 = (us, u)$ is identical, except
in this case, $f(e_1)$ is in $C_{e_0}(T')\cap \axis_{T'}(w)$. 
These two cases also show that if $\alpha = s$ or $\overline s$, then both $f(e_1)$ 
and $f(e_k)$ contains an edge in $C_{e_0}(T')\cap \axis_{T'}(w)$. 
\end{proof}

\begin{proof}[Proof of Theorem~\ref{slowchange}]
Recall $\alpha \in \F$ is a loop and 
the edge path 
$$
P := [e_1e_2\dots e_k] = \axis_T(w) \cap C_{e_0}(T) 
$$
is the edge path that realizes the maximum in the definition of $\twist_\alpha(T, T_0)$. 
By Lemma~\ref{threecases}, either $f(e_1)$ or $f(e_2)$ contains an edge that is in 
$C_{e_0}(T') \cap \axis_{T'}(w)$. Call the associated edge in $T$ (either $e_1$ or $e_2$), 
$e_{\rm first}$. Likewise, one of $f(e_{k-1})$ or $f(e_k)$ has this property.
Call the associated edge in $T$ (either $e_{k-1}$ or $e_k$) $e_{\rm last}$. 
The combinatorial length of the edge path $[e_{\rm first} \dots e_{\rm last}]$ is 
at least 
\[
k-2= \twist_{\alpha}(T, T_0) \, \ell_T(\alpha) -2.
\]
Hence, if we define
\begin{equation} \label{Eq:Integer}
p := \left[ \twist_{\alpha}(T, T_0)-\frac  {2}{\ell_{R}(\alpha)} \right]\\
\end{equation}
then, $(k-2) \geq p \cdot \ell_R(\alpha) = p \cdot \ell_T(\alpha)$.
Therefore, $[e_{\rm first} \dots e_{\rm last}]$ contains at least $p$ copies of the 
fundamental domain of the action of $w$ on $\axis_T(w)$ and, in particular, 
$w^p(e_{\rm first})$ (the translation of $e_{\rm first}$ by the action of $w^p$)
lies on $[e_{\rm first} \dots e_{\rm last}]$. Let $P_{\rm remain}$ be the edge path 
$[w^p(e_{\rm first}) \dots e_{\rm last}]$. Since $e_{\rm first}$ contains
an edge in $\axis_{T'}(w)$, so does $w^p(e_{\rm first})$. That is, 
$f(P)_w$ consists of $p$ fundamental domains of the action of $w$
on $\axis_{T'}(w)$ and the segment $f(P_{\rm remain})_w$. Thus
\begin{equation} \label{Eq:twist_after}
\twist_\alpha(T', T_0) \geq 
\frac{p \ell_{T'}(\alpha) +|f(P_{\rm remain})_w|_{T'}}{\ell_{T'}(\alpha)}=
p + \frac{|f(P_{\rm remain})_w|_{T'}}{\ell_{T'}(\alpha)}.
\end{equation}
Also, by \lemref{factorof2}
\[
\ell_{T'}(\alpha) \leq 2 \ell_{T}(\alpha)
\qquad\text{and}\qquad
|f(P_{\rm remain})_w|_{T'} \geq \frac{|P_{\rm remain}|_T}2. 
\]
Also, since 
\[
|P_{\rm remain}|_T \geq |P|_T - p \, \ell_T(\alpha) \geq (k-2) - p \, \ell_T(\alpha), 
\] 
we have 
\begin{align*}
\frac{ |P_{\rm remain}|_T}{\ell_T(\alpha)} &\geq
\frac{k}{\ell_T(\alpha)} - \frac 2{\ell_T(\alpha) } - p \\
\end{align*}
Let $L: = \ell_T(\alpha)$, we then have
\begin{align*}
\frac{ |P_{\rm remain}|_T}{\ell_T(\alpha)} &\geq
\frac{k}{\ell_T(\alpha)} - \frac 2{\ell_T(\alpha) } - p \\
 & \geq \twist_{\alpha}(T, T_0)-\frac 2L - p = 
\left\{ \twist_{\alpha}(T, T_0)-\frac 2L \right\},
\end{align*} 
where the last equality follows from the definition of $p$. Hence
\begin{align} \label{Eq:Remain}
 \frac{|f(P_{\rm remain})_w|_{T'}}{\ell_{T'}(\alpha)} 
 \geq \frac{|P_{\rm remain}|_{T}/2}{2\ell_{T}(\alpha)} 
 \geq \frac{\left\{ \twist_{\alpha}(T, T_0)-\frac 2L \right\}}4. 
\end{align}
Equation~\ref{Eq:fractional} follows from Equations \eqref{Eq:Integer}, \eqref{Eq:twist_after}and \eqref{Eq:Remain}. 

\medskip
For $L \geq 2$, Equation~\ref{Eq:fractional} implies Equation~\ref{Eq:all cases}. 
If $L =1$, from the second assertion of \lemref{threecases}, we have that 
either $e_1 = e_{\rm first}$ or $e_k = e_{\rm last}$. Hence, $f(P)_w$ contains at least 
$(k-1)$ fundamental domains of action of $w$ and $\tw_\alpha(T', T_0)\geq k-1$. 
But $\tw_\alpha(T, T_0)=k$. Therefore, Equation~\ref{Eq:all cases} still holds. 
\end{proof}

\begin{theorem}\label{Thm:slowtwist}
For any folding sequence $T_m, \dots, T_0$ and any loop $\alpha$, we have
\[
m \geq \tw_\alpha(T_m, \, T_0).
\] 
Further, if $\ell_{T_i}(\alpha)\geq L>50$ for every $i$, then 
\[
m > \tw_\alpha (T_m, \, T_0) \left(\log_{5} \frac{L}{50}\right). 
\] 
\end{theorem}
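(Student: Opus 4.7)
The plan is to iterate the one-step estimates from Theorem~\ref{slowchange} along the folding sequence and telescope. First I would observe that the proof of Theorem~\ref{slowchange} is not specific to the base rose: it uses only the partition structure of $\partial\F$ from Proposition~\ref{boundaryequivalent} and goes through with any fixed $\F$--tree in place of $R_0$. I therefore take $T_m$ as the reference and set $\tau_i := \twist_\alpha(T_i,T_m)$, noting that $\tau_m=0$ because any slice of a tree over one of its own edges consists of a single edge (so $\tw_\alpha(T_m,T_m)=0$).

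For the linear bound, Equation~\eqref{Eq:all cases} applied to each consecutive pair $T_i, T_{i+1}$ gives $\tau_{i+1}\ge \tau_i-1$. Telescoping from $i=0$ to $m-1$ yields $\tau_0\le \tau_m+m = m$, and taking integer parts gives $\tw_\alpha(T_0,T_m)\le m$.

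For the logarithmic refinement I would use the sharper inequality~\eqref{Eq:fractional}. Group the indices $0,1,\dots,m$ into maximal blocks on which $[\tau_i]$ is constant; since the sequence is (essentially) monotone from $\tau_0$ down to $0$, the number of block-boundary transitions equals exactly $\tw_\alpha(T_0,T_m)$. Within a block at integer level $n-1$, writing $\delta_i:=\tau_i-(n-1)\in(0,1]$, inequality~\eqref{Eq:fractional} becomes the worst-case contraction
\[
\delta_{i+1}\ \ge\ \tfrac{1}{4}\bigl(\delta_i-\tfrac{2}{L}\bigr),
\]
a linear recurrence of ratio $1/4$ about the fixed point $-\tfrac{2}{3L}$. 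Solving explicitly, $\delta_i$ remains above the threshold $\tfrac{2}{L}$—and hence the integer part $[\tau_i]$ cannot drop—for at least $\log_5(L/50)$ folds, the base $5$ and the hypothesis $L>50$ being chosen to absorb the drift and constant terms cleanly. Multiplying the number of integer drops, $\tw_\alpha(T_0,T_m)$, by $\log_5(L/50)$ yields the claimed lower bound on $m$.

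The main obstacle is the constant-tracking in the second part. Because \eqref{Eq:fractional} is only a lower bound on $\tau_{i+1}$, one must argue block by block that the worst-case iteration really does force $\log_5(L/50)$ folds per unit decrement of $[\tau_i]$, and verify that the transition at the moment $[\tau_i]$ drops does not return a macroscopic amount of twisting that would allow subsequent integer drops to be cheaper. The awkward base $5$ (rather than the natural base $4$ coming from the contraction) and the threshold $L>50$ arise from this bookkeeping: they give just enough slack to absorb the drift $\tfrac{2}{L}$ and the fractional loss at block boundaries uniformly in $L$.
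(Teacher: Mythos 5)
Your proposal is correct and follows essentially the same approach as the paper: iterate Theorem~\ref{slowchange} with $T_m$ in place of $R_0$ (a substitution the paper also makes implicitly), telescope Equation~\eqref{Eq:all cases} for the linear bound, and for the logarithmic bound decompose the index set into blocks where the integer part of the relative twist is constant, using Equation~\eqref{Eq:fractional} to show each block has length at least $\log_5(L/50)$. The paper runs the contraction with the simpler bound $r\mapsto r/5$ (valid when $r>10/L$) rather than solving your linear recurrence exactly, which is why base $5$ and the threshold $10/L$ appear in the writeup; your exact solution with fixed point $-2/(3L)$ would give a marginally sharper constant but the same asymptotics. One small inaccuracy worth noting: the slice $C_{e_0}(T_m)$ over an edge of $T_m$ itself is actually \emph{empty} (the core $\Core(T_m,T_m)$ is the diagonal, which carries no $2$--cells), not a single edge — but your conclusion $\tw_\alpha(T_m,T_m)=0$ is still correct.
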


\begin{proof}
The first assertion of the theorem follows directly from the first assertion of 
\thmref{slowchange}. We prove the second assertion. 

For a real number $\frac{10}{L}< r$, we have
\begin{equation} \label{Eq:5} 
r-\frac 2L> \frac{4r}{5} 
\qquad\text{and}\qquad
\frac{r -\frac2L}{4} >  \frac{r}{5}.
\end{equation}

For any $0\leq N < \tw_\alpha(T_m, T_0)$, consider the first time $i=i(N)$ 
when $\tw_\alpha(T_i, T_0) = N$. Since the index decreases as we fold towards $T_{0}$, $i$ is the maximal of all the times $\tw_\alpha(T_j, T_0) = N$. 
Now apply \thmref{slowchange}  with $T = T_{i+1}$ and $T' = T_{i}$. Since $i$ is the first time $\tw_\alpha(T_i, T_0) = N$, we have 
\[
[\twist(T_i, T_0)] = N \qquad \text{and} \qquad [\twist(T_{i+1}, T_{0}) ]= N+1.
\]
That is to say, $[\twist(T_{i+1}, T_{0}) - \frac{2}{L}] \geq N$ as $\tfrac 2L < 1$. 
By Equation~\ref{Eq:fractional}, 
\begin{align*}
\twist(T_i, T_0) &\geq \Big[\twist(T_{i+1}, T_{0})- \frac{2}{L} \Big] + \frac{\Big\{\twist(T_{i+1}, T_{0})- \frac{2}{L}\Big\}}4\\
\intertext{therefore}
N+ \{\twist(T_i, T_0) \}  
  &\geq N + \frac{\Big\{\twist(T_{i+1}, T_{0})- \frac{2}{L}\Big\}}4\\
\intertext{and hence}
 \{\twist(T_i, T_0) \}  &\geq  \frac{\Big\{\twist(T_{i+1}, T_{0})- \frac{2}{L}\Big\}}4.
\end{align*}

Again since this is the first time that $\tw_\alpha(T_i, T_0) = N$, we have
\[ \twist(T_{i+1}, T_0) \gneq N \geq 0. \]
That is to say $\twist(T_{i+1}, T_0) \geq 1$. From \eqnref{Eq:fractional}, we also have 
\[
\twist_\alpha(T_{i}, T_0) \geq 
\left[ \twist_\alpha(T_{i+1}, T_0)- \frac {2}{L} \right].
\]
Combined with the fact that $ \tw_\alpha(T_{i}, T_0)  < \tw_\alpha(T_{i+1}, T_0)$, we have that 
\begin{equation}\label{fractionalreduce}
\Big \{\twist_\alpha(T_{i+1}, T_0) \Big \} <  \frac {2}{L}.
\end{equation}
Therefore, 
\begin{align*}
\Big\{\twist(T_i, T_0)\Big\} &\geq \frac{\Big\{\twist(T_{i+1}, T_{0})- \frac{2}{L}\Big\}}4 \\
& = \frac{\Big\{\twist(T_{i+1}, T_{0}) + 1- \frac{2}{L}\Big\}}4 \\
& = \frac{\Big\{ \big\{\twist(T_{i+1}, T_{0})  \big \}+ 1- \frac{2}{L}\Big\}}4 \\
&\geq  \frac{\Big\{ 1- \frac{2}{L}\Big\}}4
 =\frac{1- \frac{2}{L}}4 \geq \frac 15.
\end{align*}

Let $(i-k-2)$ be first time when $\tw_\alpha(T_{i-k-1}, T_0) = N-1$. Then the step immediately before that we again have
\[
\Big\{\twist(T_{i-k-1}, T_0)\Big\} \leq \frac{2}{L}. 
\]
Which means for the step before that we have 
\[
\frac{\Big\{\twist(T_{i-k}, T_{0})- \frac{2}{L}\Big\}}4 \leq \Big\{\twist(T_{i-k-1}, T_0)\Big\}  \leq \frac{2}{L}. 
\]

Which means  $\Big\{\twist(T_{i-k}, T_{0}) \Big\} \leq \frac {10}{L}$.
That is, in $k$-steps, the fractional twist has been reduced from above $\frac 15$ 
to below $\frac {10}{L}$. Thus, from  \propref{slowchange} and \eqnref{Eq:5}, we have
\[
\frac{\frac 15}{5^k} < \frac{10}{L},\]
which can be rewritten as
\[k > \log_5 \frac L{50}. 
\]
Since this is true for every $0\leq N < \tw_\alpha(T_0, T_m)$, we have
\[ 
 \tw_\alpha (T_m, T_0) <  \frac{m}{\log_{5} \frac{L}{50}}\\
\quad\Longrightarrow\quad 
m > \tw_\alpha (T_m, T_0)  \left(\log_{5} \frac{L}{50}\right), 
\]
which is as desired. 
 \end{proof}

\subsection{ The lower bound is sharp}
At first glance, one might think, that the factor $\log_{5} \frac{L}{50}$  in \thmref{slowchange}
could be replaced with a linear function of $L$. However, we show that the above 
estimate is sharp up to a uniform multiplicative error. We now construct, for an 
arbitrarily large $L$  a folding path $R=R_m, \dots, R_0$ so that
\begin{itemize}
\item The length of $\alpha$ at each $R_i$ is at least $L$. 
\item $m$ is comparable with $\log L \tw_\alpha(R, R_0)$. 
\end{itemize} 

\begin{example} \label{Exa:Log}
Let $R$ be a rose of rank $5$ with edge labels 
\[
\big\langle (bc)^m a, d \, b , \phi^k(d) c, d, e \big\rangle
\]
where $\phi \from \langle d, e \rangle \to \langle d, e \rangle$ is a
fully irreducible automorphism of the free factor $\langle d, e \rangle$
with exponential growth and 
\[ \lambda(\phi) = \lambda(\phi^{-1}).\]
Let $L$ be the word length of $\phi^{\lfloor k/2 \rfloor}(d)$
(and also the length of $\phi^{-\lfloor k/2 \rfloor}(d)$) in $\langle d, e \rangle$
which can be chosen to be arbitrarily large by choosing $k$ large enough. Let 
$\alpha = [bc]$.  Let $R_0$ be a rose with labels 
\[
\langle a, d \, b , \phi^k(d) c, d ,e \rangle.
\]

The shortest way to express $\alpha$ in $R$ is 
\[
bc = (\bd)  \cdot (db) \cdot \phi^{-k}(d) \cdot (\phi^k(d) c),
\]
where the terms in parentheses are labels of edges in $R$ and is a word of 
length roughly $L^2$ in $\langle d, e \rangle$. We have
\[
\ell_{R} (\alpha) \asymp L^2 > L. \]
Similar to Example~\ref{twistingexample}, we have 
\[
 \tw_{\alpha}(R, R_0) = m-1.
\]
We now start twisting around the loop $[bc]$, however in a somewhat 
un-natural way that always keeps the length of $\alpha$ larger than $L$, using 
the following steps:
\begin{enumerate}[1)]
 \item Twist the $((bc)^m a)$--loop around the first half of $\alpha$, that is to say, cancel $b$ 
 (which is half of $\alpha$) from $(bc)^m a$. To do this we first fold the $((bc)^m a)$--loop around the $(d)$--loop, and then fold the resulting loop around $(db)$--loop. Thus this step uses 2 folds: 
 
\[\overline{(db)} \cdot d \cdot (bc)^m a = c (bc)^{m-1}a.\]
 \item  Fold $\langle d, e \rangle$ to $\langle \phi^{k}(d), \phi^k(e)\rangle$; this takes 
\[
k \, \Vert \phi \Vert_{\langle d, e \rangle} \asymp \log L
\]
many steps. Note that the immersed loop
 \[
bc = (\bd)  \cdot (db) \cdot \phi^{-k}(d) \cdot (\phi^k(d) c),
\]
contains both $\bd$ and $\phi^{-k}(d)$. At any point along this folding path, 
say after $\phi_i$ has been applied $i$ times, we have
\[
\max\Big( \big| \bd \big|_{\big\langle \phi^i (d), \phi^i (e) \big\rangle},  
   \big| \phi^{-k}(d) \big|_{\big\langle \phi^i (d), \phi^i (e) \big\rangle}\Big) 
  \geq |\phi^{\lfloor k/2 \rfloor}(d)| = L. 
\]
where $| \param |_{\big\langle \phi^i (d), \phi^i (e) \big\rangle}$ denotes the word
length of an element in the group $\langle d,e \rangle$ in terms of $\phi^i (d)$ and
$\phi^i (e)$. Thus the combinatorial lengths of $\alpha$ remains at least $L$ for each $i$.
 
\item Twist around the second half of $\alpha$, that is, cancel $c$ from 
$c (bc)^{m-1}a$ in 2 folds:
  \[ \overline{ (\phi^k(d) c)} \cdot \phi^{k}(d) \cdot c(bc)^{m-1}a = (bc)^{m-1}a.\]
\item Fold $\langle \phi^{k}(d), \phi^k(e)\rangle$ to $\langle d, e \rangle$. Again, 
the number of steps is comparable to $\log L$ and the length of $\alpha$ remains
larger than $L$. 
\end{enumerate}

We now repeat steps 1--4, $m$--times. Every time the relative twisting around 
$\alpha$ is reduced by 1. The path has a length of order $m \log L$
as desired. 
\end{example}

\section{Quasi-Geodesics in \Out} \label{Sec:Quasi} 
In this section we use \thmref{Intro:slowtwist} to prove \thmref{Intro:backtrack}. 
Consider a path $p \from [0,m] \to \calX$ from an interval $[0,m] \subset \RR$ to 
a metric space $\calX$ and, for $i \in [0,m]$, let $p_i = p(i)$. Recall that $p$ is a 
$(K,C)$--quasi-geodesic if for any 
$i, j \in [0, m]$, we have
\[
\frac {|i-j| - C}{K} \leq d_\calX\big(p_i, p_j\big) \leq K \, |i-j| +C. 
\]
It follows that, any 3 points in the image of a quasi-geodesic satisfy 
a coarse reverse triangle inequality. Namely, for any $i, j, k \in [0,m]$,
$i \leq j \leq k$, we have 
\begin{equation} \label{Eq:Reverse} 
d_\calX(p_i, p_j) + d_\calX(p_j, p_k) \leq K^2 d_\calX(p_i, p_k) + (K+2)C. 
\end{equation} 
We say $p$ is a re-parametrized $(K,C)$--quasi-geodesic if there is a 
re-parametrization $\rho \from [0, m] \to [0,m]$ so that $p \circ \rho$ is a 
$(K,C)$--quasi-geodesic. Since the image of $p$ and $p \circ \rho$
are the same, if $p$ is a re-parametrized quasi-geodesic, any 3 points 
in its image still satisfy the coarse-reverse-triangle inequality given in \eqnref{Eq:Reverse}. 
Often, it is convenient to consider maps from intervals $[0,m]_\ZZ$ in $\ZZ$ to a metric
space $\calX$. Then we say $p \from [0,m]_\ZZ \to \calX$ is a re-parametrized 
quasi-geodesic if it is a restriction of a re-paramterized quasi-geodesic 
from $[0,m] \to \calX$. 

We can now restate \thmref{Intro:backtrack} explicitly as follows: 

\begin{theorem}[Quasi-geodesics back-track in sub-factors] \label{Thm:Stallings-bad}
For given constants $K_1$, $C_1$, $K_2$, $C_2 >0 $ there exists an automorphism, 
$\phi \in \Out$ such that, for any $(K_1, C_1)$--quasi-geodesic $p \from [0,m] \to \Out$ 
with $p(0) = id$ and $p(m) = \phi$, the shadow $\Theta_\calA \circ p$ of $p$ 
in $\calF(A)$ is not a $(K_2, C_2)$--reparameterized quasi-geodesic.
\end{theorem}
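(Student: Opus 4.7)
The plan is to build an explicit pair $(\phi,\calA)$ in which $\phi$ performs a large twist around a loop $\alpha \in \calA$ that is nevertheless far from the common shadow $\Theta_\calA({\rm id}) = \Theta_\calA(\phi)$ in $\calF(\calA)$. \thmref{Intro:slowtwist} will force any efficient path to shrink $\alpha$ at some intermediate stage; at that stage the Bestvina--Feighn projection lemma quoted above (Lemma 3.3 of \cite{BFhyperbolicity}) places the shadow close to $\alpha$, and the resulting detour in $\calF(\calA)$ violates the reverse triangle inequality \eqref{Eq:Reverse}.

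For the construction, I would enlarge $n$ to be at least $4$, set $\calA = \langle a,b,c\rangle$ (so $\calF(\calA)$ is a non-trivial hyperbolic complex), and fix a fully irreducible $\psi \in \Aut(\calA)$ of exponential growth rate $\lambda > 1$. For integer parameters $p,t$ to be specified in terms of $K_1,C_1,K_2,C_2$, let $\alpha = \psi^p(a)$ and define
\[
\phi(a)=a, \quad \phi(b)=b, \quad \phi(c)=c, \quad \phi(d)=d\,\alpha^t,
\]
extending by the identity on the remaining generators of $\F$. A short realization of $\phi$ in $\Out$ goes as follows: first perform $O(p)$ folds inside the $\calA$-part of the rose to change its basis to $\{\psi^p(a),\psi^p(b),\psi^p(c)\}$ so that $\alpha$ becomes a single edge; then do $t$ elementary right-transvections of $d$ by that edge; then reverse the basis change. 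This gives $|\phi|_\Out = O(p+t)$. In parallel, using the Guirardel core over the $d$-edge of $T_0$, one verifies $\tw_\alpha(R,R_0) \geq t$, which is the Clay--Pettet twist count produced by the $\alpha^t$-insertion into the $d$-edge.

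Now let $p \from [0,m] \to \Out$ be any $(K_1,C_1)$-quasi-geodesic from ${\rm id}$ to $\phi$, so $m \leq K_1|\phi|_\Out + C_1 = O_{K_1,C_1}(p+t)$. Associated to each $\phi_i=p(i)$ is the marked rose $R_i=\phi_i(R_0)$; consecutive $R_i$'s are $O(1)$ folds apart, so the $\Out$-path yields a folding sequence of length comparable to $m$. Suppose for contradiction that $\ell_{R_i}(\alpha) \geq L$ for every $i$. Then \thmref{Intro:slowtwist} gives $m \gtrsim t\,\log_5(L/50)$, so $L \leq 50\cdot 5^{O_{K_1,C_1}(1+p/t)}$. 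Setting $t=p^2$ collapses the right-hand side to a constant $L_0 = L_0(K_1,C_1)$. Since $\ell_R(\alpha)=\lambda^p$, taking $p$ large enough forces some index $i^*$ with $\ell_{R_{i^*}}(\alpha) \leq L_0$, so Lemma 3.3 of \cite{BFhyperbolicity} yields $d_{\calF(\calA)}(\Theta_\calA(R_{i^*}),\alpha) \leq 6L_0+13$. On the other hand, $\phi|_\calA = {\rm id}$ gives $\Theta_\calA(R_0)=\Theta_\calA(R_m)=a$, and the loxodromic action of the fully irreducible $\psi$ on $\calF(\calA)$ yields $d_{\calF(\calA)}(a,\alpha) \asymp p$. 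Hence each of $d_{\calF(\calA)}(\Theta_\calA(R_0),\Theta_\calA(R_{i^*}))$ and $d_{\calF(\calA)}(\Theta_\calA(R_{i^*}),\Theta_\calA(R_m))$ is $\geq p - O_{K_1,C_1}(1)$, while $d_{\calF(\calA)}(\Theta_\calA(R_0),\Theta_\calA(R_m))=0$. The reverse triangle inequality \eqref{Eq:Reverse} for a reparametrized $(K_2,C_2)$-quasi-geodesic $\Theta_\calA\circ p$ would then force $2p - O(1) \leq (K_2+2)C_2$, which fails once $p$ is chosen above a threshold depending on all four constants.

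The main technical obstacle is the verification $\tw_\alpha(R,R_0) \geq t$ directly from the Guirardel-core definition: geometrically it is clear that $\phi$ inserts $t$ copies of $\alpha$ adjacent to the $d$-edge, but identifying which edges of $T$ sit in the $d$-slice $C_{e_0}(T)$ and counting the intersection with $\axis_T(\alpha)$ requires some care. Once this twisting bound is in place, the rest of the argument is a routine combination of \thmref{Intro:slowtwist}, the Bestvina--Feighn projection lemma, and the standard fact that fully irreducibles act loxodromically on the free factor complex.
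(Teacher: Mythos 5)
Your proposal is correct and rests on the same four ingredients as the paper's own argument: Theorem~\ref{Thm:slowtwist}, Lemma~3.3 of \cite{BFhyperbolicity}, the loxodromic action of an irreducible automorphism on $\calF(\calA)$, and the reverse triangle inequality \eqref{Eq:Reverse} — only the rank and the free factor are slightly larger than necessary (the paper works in $\FF_3$ with $\calA = \langle a,b\rangle$), and the contradiction is run in the opposite direction: the paper assumes the shadow is a reparametrized quasi-geodesic, uses \eqref{Eq:Reverse} together with loxodromicity and the contrapositive of Lemma~3.3 to deduce $\ell_{R_i}(\alpha) \geq L \asymp q$ for every $i$, and then contradicts the bound $m \leq 4K_1 q + K_1 t + C_1$ via the slow-twist estimate, whereas you apply the slow-twist estimate first (with $t = p^2$) to force $\ell_{R_{i^*}}(\alpha)$ to drop below a constant, then use Lemma~3.3 directly and contradict \eqref{Eq:Reverse}. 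On your flagged obstacle, note that the paper likewise asserts $t = \tw_\alpha(R_0,R_m)$ with no explicit Guirardel-core computation, relying implicitly on the Clay--Pettet theory \cite{twistdefinition}, so your caution is well placed but the gap is shared with the original.
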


\begin{proof}
Let $ \langle a, b, c \rangle $ be a generating set for $\mathbb{F}_3$, and let 
$R_0$ be a rose with labels $\{a, b, c \}$. Let $\psi \in \out(\mathbb{F}_3)$ be 
an automorphism defined as:
\[
 \left \{
 \begin{array}{ll}a \longrightarrow aba  \\
  b \longrightarrow ab\\
  c \longrightarrow c
\end{array}
\right.
\]

Given the generating set introduced in the introduction,
$\Vert \psi \Vert = \Vert \psi^{-1} \Vert= 2$ where $\Vert \param \Vert$ represents the 
word length. Let $\calA = \langle a, b \rangle < \calF_3$ be a rank $2$ free factor.  
The automorphism $\psi$ fixes $\calA$. We denote the restriction of $\psi$
to $\calA$ by $\psi_{\calA}$. Then, $\psi_{\calA}$ is an irreducible automorphism 
and acts loxodromicly on the free-factor graph $\calF(\calA)$ of $\calA$. 
Hence, there exists a constant $c_\psi>0$ so that, for an integer $q>0$
\begin{equation} \label{Eq:lox} 
d_\calA\big(R_0, \psi^q(R_0)\big) \geq c_\psi q. 
\end{equation}
Let $\alpha$ be the loop represented by the word $\psi^q(a)$ and, for a 
large integer $t>0$, let $\phi \in \out(\mathbb{F}_3)$ be the automorphism that twists the element $c$ 
around $\alpha$ $t$--times, namely:
\[
 \left \{
 \begin{array}{ll}a \longrightarrow a  \\
  b \longrightarrow b\\
  c \longrightarrow c(\psi^q(a))^t
\end{array}
\right.
\]

We first find an upper-bound for the $\Vert \phi \Vert$ by constructing a path connecting
identity to $\phi$. First apply $\psi^q$ so $\alpha$ is represented by one edge in the
rose $\psi^q(R_0)$, then twist $c$ around $\alpha$ $t$--times, and then apply $\psi^{-q}$. 
We have
\[
\Vert \phi \Vert \leq q(\Vert \psi \Vert + \Vert \psi^{-1} \Vert) + t = 4q+t. 
\]

Now consider the $(K_1, C_1)$--quasi-geodesic $p \from [0,m] \to  \out(\mathbb{F}_3)$
connecting the identity to $\phi$. We have
\begin{equation} \label{Eq:m}
m \leq K_1 \Vert \phi \Vert + C_1 \leq 4K_1 \, q + K_1 \, t + C_1. 
\end{equation}
If the shadow of $p$ to $\calA$ is a $(K_2, C_2)$--reparatmetrized quasi-geodesic, then the coarse-reverse-triangle-inequality (\eqnref{Eq:Reverse}) holds. That is, for any 
index $i$ and $R_i = p(i) (R_0)$, we have
\[
d_\calA(R_0, R_i) + d_\calA(R_i, \phi(R_0)) \leq 
(K_2)^2 d_\calA(R_0, \phi(R_0)) + (K_2+2)C_2. 
\]
But $\phi$ fixes $a$ and $b$ and thus $R_0$ and $\phi(R_0)$ have the same projection to 
$\calA$. Hence, 
\[
2 d_\calA(R_0, R_i)  \leq (K_2+2)C_2. 
\]
Now, using \eqnref{Eq:lox}, we get
\[
d_\calA(R_i, \psi^q(R_0)) \geq d_\calA(R_0, \psi^q(R_0)) - d_\calA(R_0, R_i) 
   \geq c_\psi q - (K_2+2)C_2. 
\]
By \lemref{Laddy}, this implies
\begin{equation}
\ell_{R_i}(\alpha) \geq \frac{c_\psi \, q -(K_2+2)C_2-13}{6} =: L 
\end{equation} 
Now, \thmref{Thm:slowtwist} implies that, 
\[
t = \tw_\alpha(R_0, R_m) \leq \frac{m}{\log_{5} \frac{L}{50}}
\]
and using \eqnref{Eq:m} we get, 
\[
t \leq \frac{4K_1 \, q + K_1 \, t + C_1}{\log_{5} \frac{L}{50}}. 
\]
If we choose $q$ large enough so that 
\[\log_{5} \frac{L}{50} > 2K_1\]
and then choose $t$ large enough so that 
\[\frac t2 >  \frac{4 K_1 \, q + C_1}{\log_{5} \frac{L}{50}}
\]
we get  
\[
t \leq \frac{4K_1 \, q + C_1}{\log_{5} \frac{L}{50}}
+ \frac{K_1 \, t }{\log_{5} \frac{L}{50}} <
\frac t2 + \frac t2 = t
\]
which is a contradiction. The contradiction proves that the shadow of $p$ to 
$\calF(\calA)$ is not a re-parametrized $(K_2, C_2)$--quasi-geodesic. 
\end{proof}

\section{Outer Space} \label{Sec:Outer-Space}
Outer Space $\CV$ is metric space with $\Out$ action defined as an analogue 
of the \Teich space. See \cite{outerspace} for more details. 
Here, we introduce $\CV$ briefly and prove Theorem D. 

We assume a \emph{graph} is always simple and all vertices have degree 3 or more.
A \emph{marked metric graph} $(x, f)$ is a metric graph $x$ together with homotopy 
equivalence $f \from R_0 \to x$. The space of all marked metric graphs whose edge 
lengths sum up to one is called the \emph{Outer Space}~\cite{outerspace} and 
is denoted by $\CV$. The group $\Out$ acts on $\CV$ by precomposing the marking: 
for an element $\phi \in \Out$, $\phi(x, f) = (x, f\circ \phi) $. 

Note that we can still think of $x$ as a labeled graph. Recall that $\ell_x(\alpha)$ denotes 
the combinatorial length of $\alpha$ in $x$. Let $|e|_x$ denote the metric length 
of an edge $e$ in $x$ and $|\alpha|_x$ the metric length of $\alpha$ in $x$, which is 
the metric length of the immersed loop of the representative of $\alpha$ that realizes 
its combinatorial length in $x$. 

For a fixed $\epsilon > 0$ define the \emph{thick part} of $\CV$ to be the set of $x \in \CV$ such that 
\[
|\alpha|_x \geq \epsilon \text{ for every nontrivial conjugacy class $\alpha$. }
\]
 
A map $h \from (x, f_x) \to (y, f_y)$ is a \emph{difference of markings} map if 
$h \circ f_x \simeq f_y$ (homotopy). We will only consider Lipschitz maps and we 
denote by $L_h$ the Lipschitz constant of $h$.  In many ways it is natural 
to consider the (asymmetric) Lipschitz metric on $\CV$:
$$
d(x, y):=\inf_h \log L_h
$$
where the infimum is taken over all differences of markings maps. We refer the reader 
to \cite{FMmetric, asymmetry} for review for some metric properties of 
$d(\param, \param)$. In particular, there always exists a non-unique 
difference of markings map that realizes the infimum. Since a difference of markings 
map is homotopic rel vertices to a map that is linear 
on edges, we also use $h$ to denote the representative that realizes the infimum and is 
linear on edges and refer to such a map as an \emph{optimal map} from
$x$ to $y$. For this section we always assume $h$ is an optimal difference of markings
map. Since $h$ is linear on edges, we define 
\[
\lambda(e) =  \frac{|h(e)|_y}{|e|_x}
\]
to be the \emph{stretch factor} of an edge $e$ and 
\[
\lambda(\alpha) =  \frac{|\alpha|_y}{|\alpha|_x}
\] 
to be the the \emph{stretch factor} of a shortest immersed loop that represents $\alpha$.  
Define the \emph{tension subgraph}, $x_{\phi}$, or $\green(x, y)$, to be the 
subgraph of $x$ consisting of maximally stretched edges. 

Now we restate and prove part (i) of \thmref{Intro:CV}:

\begin{theorem}[Shadow of a geodesic in $\CV$ is not a quasi-geodesic in 
$\Out$] \label{Thm:CV-bad}
For given constants $K$ and $C$, there are points $x, y \in \CV$ such that for every 
geodesic $[x, y]_{\CV}$ in $\CV$ connecting $x$ to $y$, its image in $\Out$ is not 
$(K, C)-$quasi-geodesic. 
\end{theorem}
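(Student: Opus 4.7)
The plan is to reduce \thmref{Thm:CV-bad} to \thmref{Thm:Stallings-bad} using the Bestvina--Feighn fact, recalled in the introduction, that geodesics in $\CV$ project to re-parametrized quasi-geodesics in the free factor graph $\calF(\calA)$ of every sub-factor $\calA$. I would reuse the construction from the proof of \thmref{Thm:Stallings-bad}: take $\calA = \langle a, b\rangle \leq \FF_3$, an irreducible automorphism $\psi$ of $\calA$ acting loxodromically on $\calF(\calA)$, and for integers $q, t$ to be fixed large with respect to the given $K, C$, set
\[
\phi \colon a \mapsto a, \qquad b \mapsto b, \qquad c \mapsto c\,\bigl(\psi^{q}(a)\bigr)^{t}.
\]
Take $x = R_0 \in \CV$ equipped with the standard unit-volume metric and $y = \phi \cdot x$; these will be the desired points.

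The main step is a proof by contradiction. Let $\gamma$ be any geodesic from $x$ to $y$ in the Lipschitz metric on $\CV$, and let $\bar\gamma$ denote its shadow in $\Out$, i.e.\ the coarse path of difference-of-marking elements read off along $\gamma$; this is a coarse path from the identity to $\phi$. Suppose, for contradiction, that $\bar\gamma$ is a $(K, C)$-quasi-geodesic in $\Out$. Composing with the projection $\Out \to \calF(\calA)$ and invoking the Bestvina--Feighn result applied to $\gamma$, the shadow of $\gamma$ to $\calF(\calA)$ is a re-parametrized quasi-geodesic; consequently so is the shadow of $\bar\gamma$ to $\calF(\calA)$. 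This produces a $(K, C)$-quasi-geodesic in $\Out$ from the identity to $\phi$ whose shadow in $\calF(\calA)$ is a re-parametrized quasi-geodesic, directly contradicting \thmref{Thm:Stallings-bad} once $q$ and $t$ are chosen large enough that the thresholds $\log_{5}(L/50) > 2K$ and $t/2 > (4Kq + C)/\log_{5}(L/50)$ used in the proof of that theorem are satisfied.

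I expect the main obstacle to be justifying the use of the Bestvina--Feighn projection statement for an arbitrary $\CV$-geodesic, since the version recalled in the introduction is phrased for greedy folding paths. If the result does not apply verbatim, the hard part will be to show that any Lipschitz geodesic between $x$ and $y$ stays within bounded $\calF(\calA)$-distance of a greedy folding path with the same endpoints, so that its $\calF(\calA)$-shadow still tracks a re-parametrized quasi-geodesic up to bounded additive error. With this comparison in hand the contradiction above goes through unchanged and the theorem follows.
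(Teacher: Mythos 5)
Your approach is genuinely different from the paper's, and the gap you flag at the end is real and not easily patched. The issue is that the Bestvina--Feighn projection theorem is a statement about \emph{greedy folding paths}, not about arbitrary geodesics in the Lipschitz metric on $\CV$. With your choice $x = R_0$ and $y = \phi\cdot R_0$, the optimal map from $x$ to $y$ is an isometry on the $a$ and $b$ petals and stretches only the $c$ petal; the tension subgraph is a single edge, so the geodesic from $x$ to $y$ is far from unique. There is no known statement that an arbitrary Lipschitz geodesic (or its $\Out$-shadow) projects to a re-parametrized quasi-geodesic in every $\calF(\calA)$, and your proposed repair --- showing that every Lipschitz geodesic stays at bounded $\calF(\calA)$-distance from a greedy folding path with the same endpoints --- is not established and is not obviously true for your particular endpoints, since the endpoints project to essentially the same vertex of $\calF(\calA)$ and a geodesic is in principle free to detour.

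The paper sidesteps this entirely by choosing different endpoints. It takes the same automorphism $\phi$, but sets $x$ to be the rose with labels $a$, $b$, $w = c(\psi^q(a))^t$ and edge lengths $\tfrac{1}{M+2}$, $\tfrac{1}{M+2}$, $\tfrac{M}{M+2}$, and $y$ to be the standard rose with all edges of length $\tfrac13$. With these lengths the stretch factor is uniform on all three petals, the tension subgraph is all of $x$, and it follows from \cite{convexball} that there is a \emph{unique} geodesic from $x$ to $y$, a folding path that folds $w$ around the $\calA=\langle a,b\rangle$ subgraph and keeps $a,b$ unchanged. Its shadow to $\calF(\calA)$ is therefore essentially constant (trivially a re-parametrized quasi-geodesic), and the contradiction machinery of \thmref{Thm:Stallings-bad} applies directly with no need for any projection theorem about general geodesics. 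If you want your version of the argument to go through, you must either prove the projection property for arbitrary Lipschitz geodesics between your $x$ and $y$, or, more simply, adopt the paper's device of arranging uniqueness of the geodesic by equalizing the stretch factors.
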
 

\begin{proof}
Consider the same example in rank 3 as in \thmref{Thm:Stallings-bad} where $\phi$ is 
defined as

\[
 \left \{
 \begin{array}{ll}a \longrightarrow a  \\
  b \longrightarrow b\\
  c \longrightarrow c(\psi^q(a))^t
\end{array}
\right.
\]

Let $w=c\, (\psi^q(a))^t\in \mathbb{F}_3$ and let $M$ be the length of 
$w$ in the basis $\langle a, b, c \rangle$. Let $x \in \CV$ be a rose where the edges 
are labeled $a$, $b$ and $w$ and
\[
\ell_x(a) = \ell_x(b) =  \frac{1}{M+2}
\qquad\text{and}\qquad 
 \ell_x(w)= \frac{M}{M+2}
\]
and let $y \in \CV$ be a rose where the edges are labeled $a$, $b$ and $c$ 
and have length $\frac 13$ each. Note that the length ratio of $a$, $b$ and
$w$ from $x$ to $y$ are identical, 
\[
\frac{\ell_y(w)}{\ell_x(w)}= 
\frac{M/3}{M/(M+2)}=\frac{M +2}{3} = \frac{1/3}{1/(M+2)}
= \frac{\ell_y(a)}{\ell_x(a)}= \frac{\ell_y(b)}{\ell_x(b)}.
\]
In particular, we have
\[
d_{\CV(x, y)} = \log \frac{M +2}{3}. 
\]
In fact, it follows from \cite{convexball} that there is a unique geodesic $[x,y]_{\CV}$ in $\CV$ connecting 
$x$ to $y$ and it folds along the unique illegal turn. Namely, it folds the edge labeled 
$w$ around the free factor $\calA$ and if $p \from [0,m] \to \Out$ is the shadow of 
$[x,y]_{\CV}$ in $\Out$, then the projection of $p$ to $\calA=\langle a, b \rangle$ backtracks. 
Hence, as was seen in the proof of \thmref{Thm:Stallings-bad}, for any $K$ and $C$, 
we can choose $q$ and $t$ large enough so that $p$ is not a $(K,C)$--quasi-geodesics. 
\end{proof}

We can also modify the example in \thmref{Thm:Stallings-bad} to prove part (ii) 
of Theorem D. For brevity, we do not define greedy folding paths here. 
They are used in \cite{BFhyperbolicity} in an essential way to prove the hyperbolicity of 
of the free-factor graph. What we need is that if $\green(x,y) = x$ then there is a
greedy folding path connecting $x$ to $y$. 

\begin{theorem} 
There are points $x$ and $y$ in the thick part of $\CV$,  such that they are connected by a greedy
folding path whose shadow in $\Out$ is not a quasi-geodesic. 
\end{theorem}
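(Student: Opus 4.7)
The strategy is to adapt the construction of Theorem~\ref{Thm:CV-bad} by redistributing the edge lengths of $x$ so that it lies in the thick part, while keeping the stretch factor uniform so that the tension subgraph $\green(x, y)$ equals $x$ and a greedy folding path connecting $x$ to $y$ continues to exist. The obstruction in Theorem~\ref{Thm:CV-bad} is that the two short edges of $x$ have length $1/(M+2)$; the remedy is to give them length $\epsilon$ and to choose the lengths of $y$ so as to preserve the uniform stretch.

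Concretely, I would use the same automorphism $\phi \in \out(\mathbb{F}_3)$ as in the proof of Theorem~\ref{Thm:Stallings-bad}, set $w = c(\psi^q(a))^t$, and let $N$ denote the combinatorial length of $\psi^q(a)$ in $\langle a, b\rangle$. Fix a small $\epsilon>0$ and let $x$ be the rose with edges labeled $a, b, w$ of lengths $\ell_x(a) = \ell_x(b) = \epsilon$ and $\ell_x(w) = 1-2\epsilon$, and let $y$ be the rose with edges $a, b, c$ whose lengths are chosen so that the piecewise-linear difference-of-markings map $h \from x \to y$ stretches every edge of $x$ by a common factor $\lambda$. A direct calculation gives
\[
\lambda = \frac{1}{1 - tN\epsilon}, \qquad \ell_y(a) = \ell_y(b) = \lambda\epsilon, \qquad \ell_y(c) = \lambda(1 - 2\epsilon - tN\epsilon).
\]
Provided $tN \leq (1-3\epsilon)/(\epsilon(1-\epsilon))$, every edge of $y$ has length at least $\epsilon$, so both $x$ and $y$ lie in the $\epsilon$-thick part of $\CV$. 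Since $h$ realizes its Lipschitz constant on every edge, $\green(x,y) = x$, and by the remark preceding the statement a greedy folding path connects $x$ to $y$.

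The shadow of this greedy folding path in $\Out$ runs from $\phi$ to the identity and unfolds the $w$-edge around the sub-factor $\calA = \langle a, b\rangle$, exactly as in Theorem~\ref{Thm:CV-bad}. Applying the argument of Theorem~\ref{Thm:Stallings-bad}, with the loop $\alpha = \psi^q(a)$ and using Theorem~\ref{Thm:slowtwist} together with Lemma~3.3 of~\cite{BFhyperbolicity}, one concludes that for any prescribed $K, C$ it is possible to choose $q$ large enough, $\epsilon$ correspondingly small, and $t$ in the permitted range so that the shadow fails to be a $(K, C)$--quasi-geodesic. The main obstacle is the tension between the thickness constraint (forcing $\epsilon$ not to be too small) and the need for a large twist parameter $t$ (which forces the short edges of $x$ to be tiny). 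This is resolved by letting $\epsilon$ depend on $K$ and $C$; the theorem only requires $x, y$ to sit in the thick part for \emph{some} positive $\epsilon$, so this flexibility is sufficient.
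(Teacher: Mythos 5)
Your approach has a genuine gap concerning the thickness constant. You correctly compute that keeping the tension subgraph equal to all of $x$ forces the constraint $tN\epsilon < 1$ (indeed $\lambda = 1/(1-tN\epsilon)$ requires $tN\epsilon < 1$), where $N = \ell_{R_0}(\psi^q(a))$ grows \emph{exponentially} in $q$. At the same time, to run the argument of Theorem~\ref{Thm:Stallings-bad} you need $q$ to be large enough that $\log_5(L/50) > 2K_1$, where $L \asymp q$, and then $t$ to be large enough, roughly $t \gtrsim q$. Combining these, $\epsilon$ is forced to be smaller than roughly $\lambda_\psi^{-q}/q$, a quantity that shrinks super-exponentially as the target quasi-geodesic constants grow. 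So your $x$ and $y$ lie only in the $\epsilon(K,C)$-thick part, with $\epsilon(K,C) \to 0$. You acknowledge this and interpret the theorem as permitting $\epsilon$ to depend on $K, C$, but that reading empties the word ``thick'' of content --- \emph{every} point of $\CV$ lies in the $\epsilon$-thick part for some $\epsilon > 0$, and your statement then reduces to a restatement of Theorem~\ref{Thm:CV-bad} with the extra observation that the geodesic there is greedy. The intended (and proved) content of this theorem is stronger: there is a \emph{universal} constant (the paper achieves $\epsilon_0 = 1/7$) so that for every $K, C$ one can find $x, y$ in the fixed $\epsilon_0$-thick part with the stated property.

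The paper avoids your tension by changing the automorphism: instead of fixing $a$ and $b$, it sends $a \mapsto \psi^n(a)$, $b \mapsto \psi^n(b)$, and $c \mapsto c\,\alpha_1^{n_1}\cdots\alpha_k^{n_k}$ for primitive loops $\alpha_i \in \calA$ chosen far from the axis of $\psi_\calA$ in $\calF(\calA)$. Since $|\psi^n(a)|_y$, $|\psi^n(b)|_y$ and $|c\,\alpha_1^{n_1}\cdots\alpha_k^{n_k}|_y$ all grow, $n$ can be tuned so that all three quantities are comparable (within a factor of $3$), which makes \emph{every} edge of the normalized $x$ have length at least $1/7$ regardless of how large the twist parameters $n_i$ are. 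The Bestvina--Feighn result that the projection of a greedy folding path to $\calF(\calA)$ is a quasi-geodesic then guarantees the shadow stays far from each $\alpha_i$, so $\ell(\alpha_i)$ stays large along the path, and Theorem~\ref{Thm:slowtwist} gives the lower bound $m \succ n_i \log L$, contradicting $\Vert\phi\Vert \lesssim n_{\max}$. If you want to salvage your approach you would need to balance the edge lengths of $x$ uniformly, which requires making the restriction of $\phi$ to $\calA$ nontrivial --- which is precisely what the paper's construction does.
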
 

\begin{proof} 
Let $ \langle a, b, c \rangle $ be a generating set for $\mathbb{F}_3$,  let $R_0$ be 
a rose with labels $\{a, b, c \}$. Let $\psi$ and $\psi_\calA$ be as before. 
Let $y\in \CV$ be the rose $R_0$ with edge-labels $a$, $b$ and $c$ and edge-lengths 
$\frac 13$. 

Let $\Gg$ be the axis of $\psi_\calA$ in $\calF(\calA)$, 
the free factor graph associated to $\calA$. Also let $\alpha_1, \alpha_2, \dots, \alpha_k \in \calA$
be primitive loops in $\calA$ that, considered as vertices in $\calF(\calA)$, are distance
$D$ or further from $\Gg$ for a large constant $D$. 
\footnote{As we shall see, it is enough
to have one such loop, however choosing many loops showcases how different
the shadow of a quasi-geodesic in $\Out$ to $\calF(\calA)$ could be from being
a quasi-geodesic.} For positive integers $n_1, n_2, \dots, n_k$, let $\phi$ be the 
following automorphism:
\[
 \left \{
 \begin{array}{ll}a \longrightarrow \psi^n(a)  \\
  b \longrightarrow \psi^n(b)\\
  c \longrightarrow c \, \alpha_1^{n_1} \dots \alpha_k^{n_k}
\end{array}
\right.
\]

We observe that $| \psi^n(a)|_y$ and $|\psi^n(b)|_y$, as a function 
$n$, grow at a fixed exponential rate that is less than $3$. Therefore for any given loops 
$\alpha_i$ and powers $n_i$, there is a power $n$ so that
\begin{equation} \label{Eq:Ratios}
\max\big(| \psi^n(a)|_y, |\psi^n(b)|_y \big) \leq
| c \, \alpha_1^{n_1} \dots \alpha_k^{n_k}|_y  \leq 
3 \, \min\big(| \psi^n(a)|_y, |\psi^n(b)|_y \big), 
\end{equation}
Let $x$ be a rose with 
edge labels, $\psi^n(a)$, $\psi^n(b)$ and $c \, \alpha_1^{n_1} \dots \alpha_k^{n_k}$ 
(note that these form a basis for $\FF_3$) and edge lengths
\[ 
\frac {| \psi^n(a)|_y}{T}, \qquad \frac {|\psi^n(b)|_y}{T} \qquad\text{and}\qquad
\frac {| c \, \alpha_1^{n_1} \dots \alpha_k^{n_k}|_y}{T}. 
\]
where \[T=| \psi^n(a)|_y+ |\psi^n(b)|_y + | c \, \alpha_1^{n_1} \dots \alpha_k^{n_k}|_y.\] 
Then $\green(x,y)=x$ and \eqnref{Eq:Ratios} implies that every edge length in 
$x$ is larger than $\frac 1{7}$ ($x$ is $\frac 1{7}$--thick part of $\CV$). 

Let $\phi_{i}$ be the shortest automorphism such that $\phi_i (\alpha_i) = a$ and let $r_{i} = \Vert \phi_{i} \Vert$. It follows from construction that, 
\begin{equation} \label{Eq:Length}
\Vert \phi \Vert \leq 3n + \sum_{i=1}^k (2r_i + n_i).  
\end{equation}
Let $\ell_i$ be the length of $\alpha_i$ in the $\langle a, b \rangle$ basis and let
$\lambda$ be the stretch factor of $\phi$. To make \eqnref{Eq:Ratios} hold, 
we need 
\[
\lambda^n \sim 1+ \sum_{i=1}^k n_i \ell_i. 
\]
Letting $n_{\rm max} = \max_i n_i$ we have, for constants
$c_1$ and $c_2$ depending on $k$, $r_i$ and $\ell_i$, that 
\[
n \leq c_1 \log n_{\rm max}, \qquad\text{and}\qquad 
\Vert \phi \Vert  \leq (k+1) n_{\rm max}+c_2. 
\]

Now let $p\from [0,m] \to \Out$ be the shadow of $[x,y]_{\rm gf}$ to 
$\Out$. Bestvina-Feighn\cite{BFhyperbolicity} showed that the projection of $[x,y]_{\rm gf}$ to $\calF(\calA)$
is a quasi-geodesic. Hence the projection of $p[0,m]$ to $\calF(\calA)$ 
stays near $\Gg$ and thus remains far from every $\alpha_i$. 
And, again by  Lemma 3.3 in \cite{BFhyperbolicity}, this implies that the combinatorial
length of $\alpha_i$ at any point along $[x,y]_{\rm gf}$ is large, say larger
than some constant $L$ depending linearly on $D$. By \thmref{slowchange}, 
\[
m \succ \max_i n_i \log L. 
\]
But, since $p$ is a quasi-geodesic, $m \prec \Vert \phi \Vert$. For large enough $L$, 
the above inequality contradicts \eqnref{Eq:Length}. This implies $p$ cannot be a quasi-geodesic. 
\end{proof}

\bibliographystyle{alpha}

\end{document}